\newtheorem{theorem}{Theorem}[section]
\newtheorem{corollary}[theorem]{Corollary}
\newtheorem{proposition}[theorem]{Proposition}
\newtheorem{lemma}[theorem]{Lemma}
\newtheorem*{theorem*}{Theorem}
\theoremstyle{definition}
\newtheorem{definition}{Definition}[section]
\theoremstyle{remark}
\newtheorem*{remark}{Remark}
\DeclareMathOperator{\PSL}{PSL}
\DeclareMathOperator{\Mod}{mod}
\renewcommand{\H}{\mathbb{H}}
\title{Lorenz attractors and the modular surface}
\author{Christian Bonatti}
\address{Laboratoire de Topologie, Dijon}
\email{bonatti@u-bourgogne.fr}
\author{Tali Pinsky}
\address{The Technion, Haifa}
\email{talipi@technion.ac.il}
\thanks{T.P.\ was supported by the Israel Science Foundation (grant No. 51/4051)}
\date{\today}
\begin{document}
\begin{abstract}
We define an extension of the geometric Lorenz model, defined on the three sphere.
This geometric model has an invariant one dimensional trefoil knot, a union of invariant manifolds of the singularities. It is similar to the invariant trefoil knot arising in the classical Lorenz flow near the classical parameters.
We prove that this geometric model is topologically equivalent to the geodesic flow on the modular surface, once compactifying the latter.
\bigskip
\end{abstract}
\maketitle

\section{Introduction}

The initial motivation for this paper is the numerical discovery \cite{pinsky2016TopologyLorenz} of an invariant trefoil knot for the classical Lorenz equations. The equations have certain parameter values called T-parameters, at which there are two heteroclinic orbits connecting the three singular points in the equations.
The heteroclinic connections can be extended into an invariant trefoil knot passing through infinity, shown in Figure~\ref{fig:trefoil_attractor}. 
Naturally, these parameters do not constitute an open set, and at such a parameter there can be no partial hyperbolicity to the attractor.

\begin{figure}[ht]
\includegraphics[width=10cm]{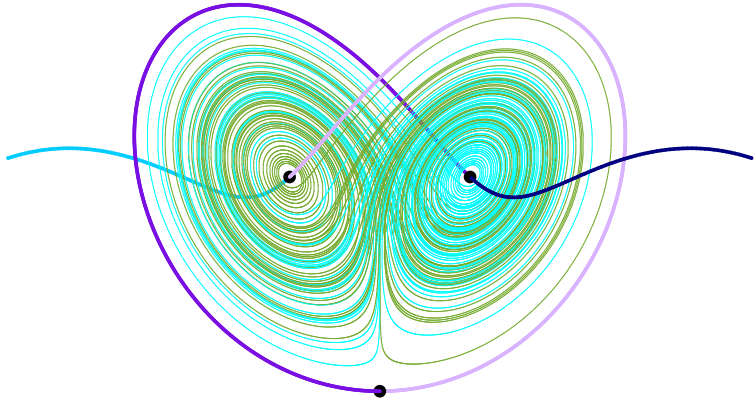}
\caption{The invariant trefoil for the Lorenz system at the T-point ($\rho\approx30.87$, $\sigma\approx10.17$, $\beta\approx\frac{8}{3}$), together with some regular orbits.}
\label{fig:trefoil_attractor}
\end{figure}

We consider the T-parameters in hope to be able to explain in a new way the connection between the Lorenz equations and the geodesic flow on the modular surface found by Ghys \cite{Ghys:KnotsDynamics}: the isotopy classes of periodic orbits of these flows coincide.
The modular geodesic flow is naturally defined on the complement of a trefoil knot in $S^3$. Thus, at the T-parameters, one may aspire to prove that the flows themselves can be compared extending the connection between the periodic orbits.

The modular surface is non-compact, its geodesic flow is defined on the non compact (i.e. cusped) complement of the trefoil knot, and hence it is a structurally unstable dynamical system. 
A way of fixing this problem is by changing the metric, changing the cuspidal end into a funnel and the surface into a infinite volume surface (still of constant curvature -1). The recurrent part of the dynamics now remains in a compact part of the trefoil complement, and a compactification of the trefoil complement containing the recurrent dynamics can be considered. Note that as the resulting system is structurally stable, the above can be done in a canonical way, independent of the change of metric, yielding a flow $\displaystyle\bar\varphi_{\Mod}$ on the compact complement of the trefoil knot.
	
The dynamics of the geodesic flow have a section with a first return map $F$ called the fake horseshoe, that is shown in Figure~\ref{fig:Fake} (see Section \ref{sec:Modular}).
This map has a natural Markov partition consisting of two rectangles, and invariant foliations depicted as vertical and horizontal in the figure.

%

\begin{figure}[ht]
\centering
\begin{overpic}[width=4cm]{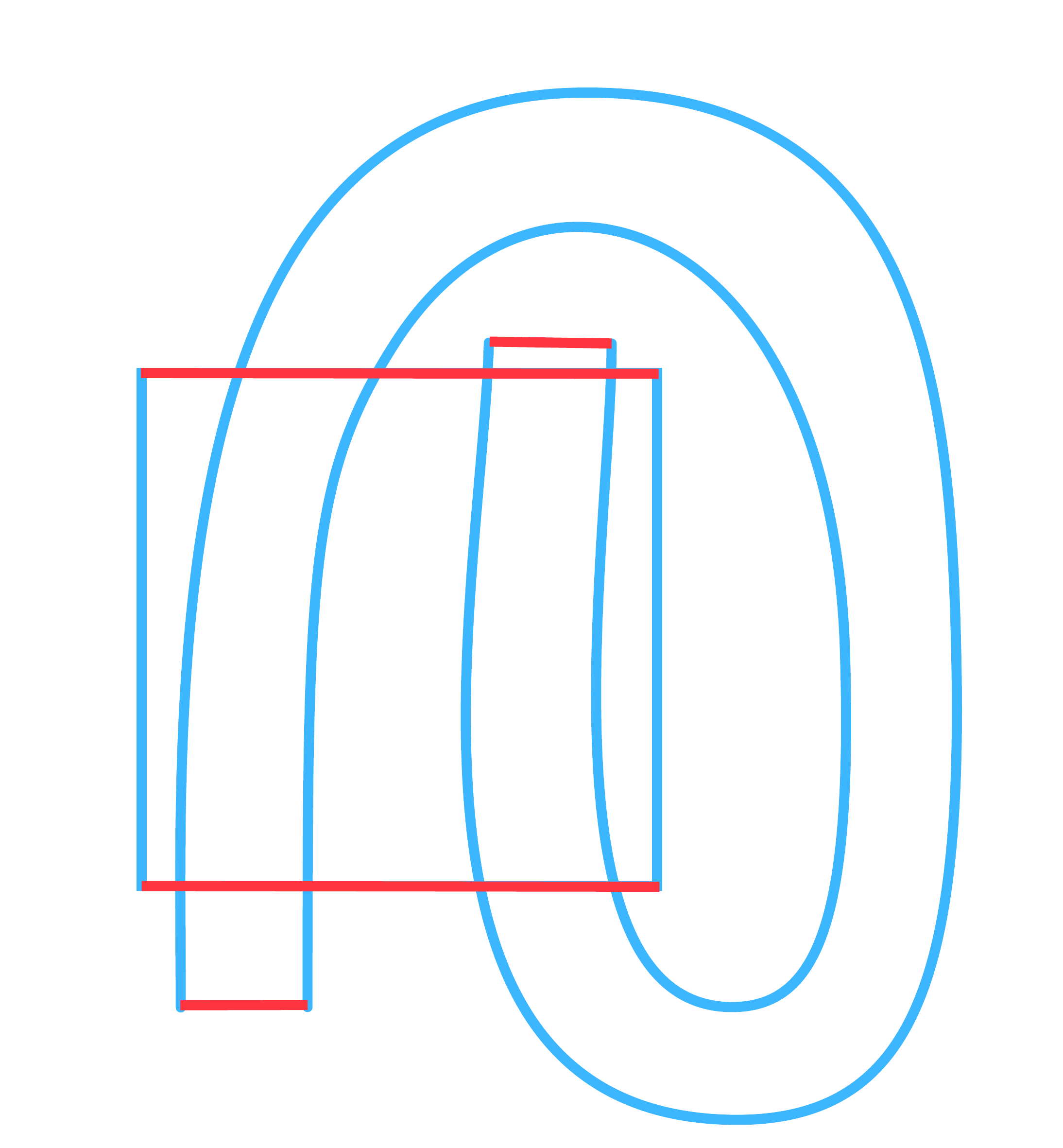}
\put(30,32){$R$}
\put(47, 83){\small{$T(R)$}}
\end{overpic}
\caption{The fake horseshoe is given by a map from a rectangle $R$ to itself, preserving the stable (horizontal) and unstable (vertical) foliations with their orientations, so that the image of each unstable leaf is a union of two unstable leaves. }\label{fig:Fake} 
\end{figure}

\begin{definition}
Given a hyperbolic basic set $K$ of a flow  $X$ on a 3-manifold $M$, a \emph{plug} for $K$ is a three manifold $N$ with boundary,  endowed with a hyperbolic flow $Y$ transverse to the boundary, 
such that a neighborhood of $K$ with the flow $X$ is topologically (orbitally) equivalent by a homeomorphism to a neighborhood of the maximal invariant set of $Y$ in $N$. 

A \emph{model} for $K$ is a plug $(N,Y)$ for $K$ so that the total genus of the boundary of $N$ (i.e. the sum of the genus of all boundary components) is minimal. 
\end{definition}

A main tool in this paper is the following theorem proven in \cite{beguin2002flots},

\begin{theorem*}[Beguin-Bonatti]
Given a hyperbolic basic set $K$ of a flow  $X$ on a 3-manifold $M$
 there exists a unique model $(N,Y)$ for $K$, up to topological equivalence.
\end{theorem*}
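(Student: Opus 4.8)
The plan is to separate existence from uniqueness, with essentially all the work going into uniqueness. For existence, I would first recall that a saddle-type hyperbolic basic set $K$ is an isolated invariant set, so a direct construction using the local product structure (or Conley's theory of isolating blocks) produces a compact $3$-manifold $N_0$ with boundary, containing $K$ in its interior, on which $X$ is transverse to $\partial N_0$, the boundary splitting into an entrance region and an exit region. The maximal invariant set of $X$ in $N_0$ is exactly $K$, which is hyperbolic, so $(N_0, X|_{N_0})$ is a plug and the family of plugs for $K$ is nonempty. Since the total boundary genus is a non-negative integer, it attains a minimum over this family, and any plug realizing that minimum is a model. Thus existence is immediate once plugs are produced; the content lies entirely in uniqueness.

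My strategy for uniqueness is to make the classification combinatorial. Choosing a Markov partition of $K$ by finitely many rectangles, I would encode $K$ together with the way its local stable and unstable manifolds sit in $M$ as a \emph{geometric type}: the abstract transition data of the partition, enriched by the orientation and ordering that record how the images of the unstable sub-rectangles are stacked inside their targets (precisely the data visible in the fake horseshoe of Figure~\ref{fig:Fake}, where the image of each unstable leaf is an ordered union of two unstable leaves). The first claim to establish is that a plug is determined up to topological equivalence by a canonical \emph{standard thickening} built from this geometric type: one fattens each rectangle to a box, glues the boxes along the flow according to the transition data while respecting the stacking orders, and fills out the complement to form the boundary. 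The second claim is that any two Markov partitions of the same $K$ admit a common refinement, and that refinement merely subdivides boxes, hence does not alter the topological equivalence class of the standard thickening; so the equivalence class of this standard object depends only on $K$, not on the chosen partition, even though the boundary genus of an arbitrary plug may vary.

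To pin down the minimal boundary I would introduce genus-lowering surgeries on plugs that preserve the maximal invariant set and the flow up to equivalence: whenever the boundary carries a compressible piece disjoint from the stable and unstable boundary laminations, one compresses it, removing a handle and dropping the total genus. A plug is then a model precisely when no such compression applies, i.e. when its boundary is incompressible relative to the invariant laminations. The final and crucial step is a confluence argument showing that the fully reduced plug is independent of the order in which compressions are performed, so that the minimal form is canonical. Combined with the standard-thickening reconstruction, this forces any two models to have homeomorphic boundaries carrying equivalent flows, and hence to be topologically equivalent.

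The hardest part, I expect, is this last confluence and rigidity step. Lowering the genus is easy; the difficulty is guaranteeing that the endpoint of the reduction is unique, so that genus minimality alone determines the topology of the boundary and its interaction with the flow, independently of every choice made along the way (the Markov partition, the initial isolating block, and the order of surgeries). This is presumably exactly what forces the careful combinatorial bookkeeping through geometric types, since without a canonical presentation of $K$ one cannot compare the minimal fillings of two a priori unrelated plugs.
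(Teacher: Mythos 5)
First, a point of reference: the paper does not prove this statement at all --- it is imported verbatim as an external result from B\'eguin--Bonatti \cite{beguin2002flots}, so there is no internal proof to compare your attempt against. Judged on its own terms, your outline does follow the general philosophy of the cited work (Markov partitions and geometric types as a finite presentation of the germ of the flow along $K$, construction of a canonical filtrating neighborhood, and reduction of the boundary), but as written it has genuine gaps that you partly acknowledge. The decisive one is the ``confluence'' step: the entire content of uniqueness is the claim that the genus-reducing surgeries terminate at a plug that is independent of the initial isolating block, the Markov partition, and the order of compressions. You assert this and even flag it as the hard part, but give no mechanism for proving it; without it nothing is established, since existence of \emph{a} minimal-genus plug is trivial (a nonempty set of non-negative integers has a minimum, as you note) and the theorem is precisely the uniqueness.

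Two subsidiary steps are also shakier than you present them. Your ``first claim,'' that \emph{any} plug is topologically equivalent to the standard thickening of a geometric type, is false as literally stated: one can always add superfluous handles or wandering topology between the maximal invariant set and the boundary, which is exactly why the minimality condition and the reduction process are needed at all; at best the standard thickening is \emph{one} plug among many. And the assertion that any two Markov partitions of $K$ admit a common refinement inducing the same thickening glosses over a genuinely delicate combinatorial problem --- relating the geometric types arising from different Markov partitions of the same basic set is a substantial part of the B\'eguin--Bonatti theory, not a routine subdivision argument. So your proposal is a reasonable roadmap in the right spirit, but it is a sketch whose crucial steps (canonicity of the reduced plug, and invariance of the geometric-type presentation) are named rather than proved.
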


Our first theorem provides a way to complete the geodesic flow on the modular surface to a flow on the sphere $S^3$, relating it to the canonical minimal model for the suspension of the fake horseshoe map, which we denote by $\psi_F$.

\begin{theorem}\label{thm:ModularModel}
The flow $\varphi_{\Mod}$ on the complement of the trefoil knot can be extended onto $S^3$ as a structurally stable flow (satisfying Axiom A+strong transversality). Furthermore, the model flow $\psi_F$ for the fake horseshoe is obtained from the resulting flow by removing a small neighborhood of a sink and a small neighborhood of a source.
\end{theorem}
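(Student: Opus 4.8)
The plan is to produce two structures from the geodesic flow: first a compactification realizing $\bar\varphi_{\Mod}$ on the closed trefoil complement, and then a further extension across the trefoil knot to obtain a flow on all of $S^3$. I would start from the funnel modification of the hyperbolic metric described in the introduction, which turns each cusp into an infinite-volume end while confining the recurrent dynamics to a compact core. On this core the geodesic flow is an Axiom~A flow whose non-wandering set consists of the hyperbolic basic set carrying the fake horseshoe together with whatever wandering orbits escape into the funnel. The key observation is that the fake horseshoe's first-return map $F$ is a genuine hyperbolic (pseudo-Anosov-type) map with a two-rectangle Markov partition, so its suspension is a hyperbolic basic set $K$ to which the Beguin--Bonatti theorem applies: there is a unique minimal model $(N,Y)=\psi_F$ for it.

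The main construction is then to attach, on each end of the compact core, a flow that is transverse to the boundary and whose only recurrent orbit is a single hyperbolic periodic orbit or singularity of sink/source type. First I would analyze the geometry of the two ends of the funnel-modified surface's unit tangent bundle: each end is (after the transversality normalization) a solid-torus-like plug across which orbits flow inward or outward. I would glue a standard ``sink plug'' to the end where orbits exit and a standard ``source plug'' to the end where orbits enter, so that the resulting closed flow on $S^3$ satisfies Axiom~A with non-wandering set equal to $K \sqcup \{\text{sink}\} \sqcup \{\text{source}\}$, and verify strong transversality directly from the product structure of the gluing. Because the glued sink and source are the \emph{only} attractor/repeller added, removing a small trivializing neighborhood of each returns a plug for $K$ whose boundary genus equals that of $\psi_F$; minimality of the genus, hence the identification with the Beguin--Bonatti model, follows from the uniqueness clause of their theorem once I check that my plug is indeed a model, i.e.\ achieves the minimal total boundary genus.

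Concretely, the argument has four steps. (1) Realize the funnel-modified geodesic flow and identify its compact recurrent core together with the two escaping ends. (2) Show the first-return dynamics on a cross-section is exactly the fake horseshoe $F$, so that the suspension is the hyperbolic basic set $K$ whose model is $\psi_F$. (3) Cap off the two ends by a source and a sink, checking that the capped flow is Axiom~A with strong transversality and that the underlying manifold is $S^3$ (here I would use that the trefoil complement capped appropriately is $S^3$, and track the framing so the knot closes up correctly). (4) Excise neighborhoods of the sink and source and invoke the Beguin--Bonatti uniqueness to conclude that what remains is equivalent to $\psi_F$.

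The hard part will be step~(3), specifically verifying that the two ends can be capped by a single sink and a single source \emph{compatibly}, so that the total space is genuinely $S^3$ and the flow is everywhere Axiom~A with strong transversality across the gluing region. The subtlety is topological as much as dynamical: I must control how the stable/unstable foliations of the escaping orbits wrap around the boundary tori of the ends, match them to the linear foliations on the boundary of the standard sink and source plugs, and confirm that this matching is consistent with the trefoil's framing in $S^3$. Showing minimality of the boundary genus (so that the excised flow is the \emph{model}, not merely a plug) is the remaining delicate point, and I expect it to reduce to a direct count of the boundary components of the suspension of $F$ against the lower bound forced by the two Markov rectangles.
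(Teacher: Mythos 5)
Your steps (1), (2) and (4) track the paper's argument: the funnel deformation confines the recurrent dynamics to the convex core, a cross-section argument identifies the return map as the fake horseshoe, and the final identification with $\psi_F$ follows from Beguin--Bonatti uniqueness because the excised plug has two sphere boundary components, hence total boundary genus $0$, which is trivially minimal --- no delicate count against the Markov rectangles is needed. The genuine gap is in step (3), where your picture of the boundary is wrong. The modular surface has a single cusp, so the truncated unit tangent bundle (the compact trefoil complement) has exactly one boundary component, a torus $T$, and the flow is \emph{not} transverse to it: $T$ is a Birkhoff torus, tangent to the flow along the two periodic orbits $h_0,h_1$ (the two unit-speed parametrizations of the boundary geodesic $h$), which cut $T$ into one annulus of inflow and one annulus of outflow. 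There are therefore no ``two ends'' to cap independently with a transverse sink plug and a transverse source plug, and a construction that presumes each end has a boundary everywhere transverse to the flow fails at the outset.

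The completion must instead be done with a single solid torus $\mathbf{W}$ (the tubular neighborhood of the trefoil) carrying a flow with one source and one sink joined by two tubes of orbits, arranged so that $\partial\mathbf{W}$ reproduces exactly the Birkhoff structure: tangent along two curves, outflow on one complementary annulus (emanating from the source) and inflow on the other (absorbed by the sink). The issue you flag as ``tracking the framing'' then becomes the one substantive topological check: by Ghys's computation the tangent orbits $h_0,h_1$ have the slope of a meridian of the trefoil, so the Dehn filling by $\mathbf{W}$ yields $S^3$. With the gluing recast this way the rest of your outline goes through, but as written step (3) does not describe a construction that can be carried out on this manifold.
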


As a consequence, we can prove:

\begin{corollary}\label{cor:FakeModel}
The underlying manifold of the model $\psi_F$ for the fake horseshoe is $S^2\times I$.
\end{corollary}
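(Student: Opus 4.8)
The plan is to read off the statement directly from Theorem~\ref{thm:ModularModel}. That theorem produces a structurally stable Axiom A flow on $S^3$ whose nonwandering set consists of the saddle basic set (the suspension of the fake horseshoe), together with exactly one attracting periodic orbit or singularity (a sink) and one repelling one (a source), since the geodesic flow on the funnel-modified modular surface has its recurrent dynamics confined to the horseshoe, and completing the trefoil complement to $S^3$ caps off the two ends with a sink and a source. The model $\psi_F$ is then, by the final sentence of Theorem~\ref{thm:ModularModel}, obtained by deleting a small flow-invariant neighborhood of the sink and of the source.

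The key topological point is to identify what remains of $S^3$ after removing these two neighborhoods. I would argue that a small isolating neighborhood of a hyperbolic sink whose basic set is a single closed orbit (or a single fixed point) is a solid torus (respectively a ball), and likewise for the source; the flow enters through the whole boundary in the attracting case and exits through it in the repelling case. Thus $N$ is $S^3$ with the interiors of two such pieces removed. The main step is to check that the sink and source here are each of the type whose complement, together, leaves a product region. I would use the fact that $N$ is a plug for the horseshoe with flow transverse to $\partial N$, so each boundary component is a cross-section-boundary carrying the induced stable/unstable structure of the fake horseshoe.

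The cleanest route is to combine two observations. First, by the Beguin--Bonatti uniqueness theorem cited in the excerpt, the model $(N,Y)$ for the fake horseshoe is unique up to topological equivalence, so it suffices to exhibit $N$ for the specific flow coming from Theorem~\ref{thm:ModularModel}; no separate minimality argument is needed once we know this realization is a plug with minimal boundary genus. Second, I would compute the boundary and the homeomorphism type of $N$ from the global structure on $S^3$: removing an open ball (around a source singularity) and an open solid-torus or ball neighborhood (around the sink) and verifying that the two removed regions are unknotted and complementary in such a way that the leftover is $S^2 \times I$. Concretely, if both removed neighborhoods are balls, then $S^3$ minus two disjoint open balls is exactly $S^2 \times I$, which gives the claim immediately; so the heart of the matter is to confirm that the sink and source are fixed points (or that their solid-torus neighborhoods can be taken so that the union still has ball-complement-type boundary), making each boundary component a sphere of total genus zero, consistent with minimality.

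The main obstacle I anticipate is verifying that the two removed neighborhoods are genuinely balls with spherical boundary rather than knotted solid tori, because a priori the completion of the trefoil complement could cap the ends with neighborhoods of periodic orbits, whose boundaries are tori and whose complement in $S^3$ need not be a product. Resolving this requires tracking, through the construction in Theorem~\ref{thm:ModularModel}, precisely which orbits compactify the funnel ends: I expect that the structural-stability/Axiom~A~+~strong-transversality conclusion forces the added extremal orbits to be a single sink and a single source of the simplest type, and that the transversality of $Y$ to $\partial N$ together with the minimal-genus requirement in the definition of \emph{model} pins down the boundary to be two spheres. Once the boundary is identified as $S^2 \sqcup S^2$ and $N$ is seen to be an $h$-cobordism (indeed a product) between them, $N \cong S^2 \times I$ follows.
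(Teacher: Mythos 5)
Your proposal reaches the right conclusion by essentially the same topological mechanism as the paper --- find two disjoint transverse $2$-spheres bounding balls that absorb all of the trivial recurrence, observe that $S^3$ minus two open balls is $S^2\times I$, note that spheres trivially minimize total boundary genus, and invoke the Beguin--Bonatti uniqueness theorem --- but you realize the plug in a different flow. You work in the completed modular flow of Theorem~\ref{thm:ModularModel}, where the two capped ends are a single sink and a single source; the paper instead proves the corollary inside the geometric model $X_r$ ($r>0$) of Section~\ref{sec:model}, taking one sphere around the source $\alpha_r$ and a second sphere enclosing \emph{three} singularities at once (the saddle $\sigma_r$ and the two sinks $\omega_r^{\pm}$, i.e.\ the Morse--Smale ball flow $A$ defined just before the corollary). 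This matters for the logical order of the paper: the proof of Lemma~\ref{lem:Completion} cites the Section~\ref{sec:model} identification of $\psi_F$, so deducing the corollary from the second sentence of Theorem~\ref{thm:ModularModel} risks circularity unless, as you in effect sketch, you prove that sentence directly on the modular side (balls around the hyperbolic sink and source, transversality, maximal invariant set equal to the horseshoe). The one soft spot in your write-up is the step you flag yourself: you only ``expect'' that the capped ends are singular points rather than attracting/repelling periodic orbits with solid-torus neighborhoods. The paper does not need to argue this abstractly because it constructs the two transverse spheres explicitly (in $X_r$ the ball around $\sigma_r,\omega_r^{\pm}$ exists by inspection of the flow $A$; in the modular completion the glued solid torus $\mathbf{W}$ visibly contains one source and one sink singularity). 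If you pin that down by pointing at the explicit completion in Lemma~\ref{lem:Completion}, your argument closes. Your approach buys a proof that stays entirely on the modular side and never mentions $X_r$; the paper's buys a self-contained Section~\ref{sec:model} argument that can then be quoted when proving Theorem~\ref{thm:ModularModel}.
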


The Lorenz equations on the complement of the trefoil knot seem to be related to the fake horseshoe map $F$ as well.
When one attempts to establish an equivalence of the geodesic flow with the Lorenz equations at points in which the trefoil exists, one encounters the following issue.
At the T-parameters for the original Lorenz equations at which there is (conjecturally, according to the numerical simulations), a trefoil that is a cycle between a saddle of s-index 2 (the dimension of the stable manifold) 2 and two saddles of s-index 1 with complex unstable eigenvalue.
At such parameters the dynamics is unstable with no hyperbolic or partially hyperbolic structure. 

It is therefore exceedingly possible that any perturbation will create wild dynamics (possessing infinitely many attractors or repellers). 
We present here an approach to turn this flow into a stable one. We suggest that one may separate the singular points from the rest of the non-wandering set by performing a Hopf bifurcation on the two s-index 1 saddles.
Indeed, numerically, such a Hopf bifurcation occurs in the Lorenz family, and can be found at parameters where the trefoil knot connection co-exists with the bifurcation. Thus, our modification to the flow may well lie within the family of flows given by the classical Lorenz equations, for example at the parameter values $(\sigma=10, \rho=20, \beta=\frac{8}{3})$, see \cite{barrio12knead}. However, we do not have much control of the dynamics at these parameters. In particular (if they exist), they lie outside the region shown by Tucker to be equivalent to the geometric model and therefore to be partially hyperbolic.

Thus, instead, we are led to construct a new extension to the geometric Lorenz models, realizing the trefoil connection but at the same time retaining their partial hyperbolicity. This allows us to perform the Hopf bifurcation as above and arrive at an Axiom A system.

An analysis of the first return map to a section for the hyperbolic basic set shows it is again the fake horseshoe $F$.
As the model $\psi_F$ for $F$ is unique, this allows to establish the equivalence between the submanifolds in each of the systems that contain the fake horseshoe. The rest of the dynamics is trivial, allowing us to extend the equivalence to the entire systems.

In Section~\ref{sec:model} we construct the extension of the geometric Lorenz models, designed to model the behaviour of the equations at T-parameters.
This new model is defined on the three sphere $S^3$ and depends on one parameter $s>0$. Similar to the Lorenz equations it has four singular points, one of which is a repeller at infinity. For every $s$, the model  $X_s$ possesses a \emph{heteroclinic knot}, a simple closed curve composed of a union of one dimensional invariant manifolds for the fixed points, which is a trefoil knot, passing through all four singular points.

 Our main result is the following:
\begin{theorem}\label{thm:Equivalence}
The family $X_r$ of flows on the sphere $S^3$ satisfies the following conditions:

\begin{enumerate}

\item For every $r<0$ the flow $X_r$ is singular Axiom A (i.e. the basic sets are either hyperbolic or singular hyperbolic). The chain recurrent set is the union of two sinks, one source and one Lorenz attractor.

\item For each $r>0$, $X_r$ is topologically equivalent to a completion of the geodesic flow  $\varphi_{\Mod}$ to $S^3$.

\end{enumerate}
	
\end{theorem}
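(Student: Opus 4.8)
The plan is to analyze the two parameter regimes separately and, in the $r>0$ case, to combine the uniqueness of the model $\psi_F$ (Beguin--Bonatti) with the structure of the modular completion from Theorem~\ref{thm:ModularModel}.

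For part~(1) I would read the chain-recurrent set directly off the construction of $X_r$ in Section~\ref{sec:model}. For $r<0$ the Hopf bifurcation has turned the two symmetric s-index~$1$ singularities into sinks, detaching them from the global return dynamics; these are the two sinks. The repeller introduced at infinity is the source. The maximal invariant set carried by the return-map geometry of the extended model, organized around the remaining s-index~$2$ singularity, is by construction a geometric Lorenz attractor and hence singular hyperbolic. Building a nested family of trapping regions adapted to these four pieces then shows, by a standard filtration argument, that every chain-recurrent orbit lies in one of them, so the chain-recurrent set is exactly the stated union and $X_r$ is singular Axiom~A.

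For part~(2) the central step is to choose the natural cross-section to $X_r$ for $r>0$ and identify its first-return map with the fake horseshoe $F$ of Figure~\ref{fig:Fake}. After the bifurcation the recurrent dynamics is captured by a genuinely hyperbolic basic set $K$, and the induced map on the Markov rectangles of the extended model preserves both the stable and unstable foliations together with their orientations and doubles each unstable leaf -- precisely the defining combinatorics of $F$. Granting this, its suspension realizes the minimal-genus plug, so by the Beguin--Bonatti theorem a neighborhood of $K$ in $X_r$ is topologically equivalent to $\psi_F$. By Theorem~\ref{thm:ModularModel} the same plug appears inside the $S^3$-completion $\bar\varphi_{\Mod}$ as the complement of a sink ball and a source ball, and by Corollary~\ref{cor:FakeModel} its underlying manifold is $S^2\times I$, so each complementary region is a $3$-ball on which the flow is gradient-like toward a single sink or away from a single source.

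It remains to promote the equivalence on the plug to an equivalence of the full flows. Writing each flow in the form $S^3 = B_{\mathrm{snk}} \cup (S^2\times I) \cup B_{\mathrm{src}}$, I would check that the homeomorphism provided by Beguin--Bonatti carries the boundary spheres of the plug in $X_r$ to those of the plug in $\bar\varphi_{\Mod}$ compatibly with the transverse flow direction, and then extend it across the two balls using that any two gradient-like flows on a $3$-ball with a single interior attractor (resp.\ repeller) and the prescribed boundary behaviour are topologically equivalent. I expect the main obstacle to be exactly this extension: one must verify that the boundary identification coming from the abstract uniqueness theorem is compatible with the attaching maps of the balls on both sides, so that no twisting obstruction prevents gluing the ball equivalences to the plug equivalence. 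The combinatorial identification of $F$ is essentially bookkeeping; controlling this gluing compatibility is where the real work lies.
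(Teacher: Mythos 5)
Your part (1) and your identification of the $r>0$ recurrent set with the fake horseshoe, followed by an appeal to the Beguin--Bonatti uniqueness of the model $\psi_F$, follow the paper's route. The genuine gap is in the final gluing step of part (2): you compare $X_r$ with the \emph{wrong} completion of $\varphi_{\Mod}$. You take the completion $\bar\varphi_{\Mod}$ of Lemma~\ref{lem:Completion}, whose complementary pieces are a single-source ball and a single-sink ball, and you propose to extend the plug equivalence across two balls ``with a single interior attractor (resp.\ repeller).'' But for $r>0$ the flow $X_r$ has \emph{four} singular points outside the basic set: the source $\alpha_r$, the two sinks $\omega_r^{\pm}$ and the saddle $\sigma_r$. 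As in the proof of Corollary~\ref{cor:FakeModel}, the attracting ball of $X_r$ carries the Morse--Smale flow $A$ (two sinks and one s-index-$2$ saddle), not a single sink. Since topological equivalence preserves the number and nature of singularities, $X_r$ is simply not equivalent to the two-singularity completion $\bar\varphi_{\Mod}$, and the extension lemma you invoke does not apply; your argument breaks exactly at the assertion that each complementary region contains a single attractor or repeller.

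The paper avoids this by choosing the completion to match $X_r$: it glues the solid torus $\mathbf{V}=\Gamma_r$ of Proposition~\ref{pro:Birkhoff} --- which contains $\alpha_r$, $\omega_r^{\pm}$, $\sigma_r$ and the heteroclinic trefoil --- to $S^3\setminus\text{trefoil}$ in place of the torus $\mathbf{W}$ of Lemma~\ref{lem:Completion}, using that $\mathbf{V}$ and $\mathbf{W}$ have the same Birkhoff-torus boundary behaviour and that gluing either creates no new recurrence. The resulting flow $\psi_0$ is the completion referred to in the statement, and its decomposition into a source ball, an $A$-ball and the $S^2\times I$ plug matches that of $X_r$ piece by piece. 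Your closing worry about compatibility of the boundary identifications when regluing the balls is legitimate (the paper dismisses this as ``clear''), but it only becomes meaningful once the pieces themselves agree; with $\psi_0$ in place of $\bar\varphi_{\Mod}$ your outline goes through.
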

 
A more precise statement, and the proof of this theorem is given by Theorem~\ref{thm:construction} and Corollary~\ref{cor:Equivalence}.

%
%
%

A natural question is whether it is possible to establish an orbit equivalence between the Lorenz flow, at a T-point where we have a trefoil connection similarly to our geometric model, and the geodesic flow on the modular surface. As the Lorenz equations cannot currently be shown to be Axiom A on the trefoil complement, this is beyond the scope of the current paper.

\subsection*{Acknowledgments}
This paper originated in discussions during the ``International Conference on Dynamical Systems'' in Buzios in July 2016. We warmly thank the organizers of the conference.

\section{The geometric model}\label{sec:model}
In this section we describe the main object of this paper, a geometric Lorenz model that is a natural extension of the classical models described below, that is defined on the entire three sphere.

The geometric models for the Lorenz equation were introduced in \cite{Guckenheimer1979} and \cite{1977OriginAndStructure} (see also \cite{ThreeDimensionalFlows_book} and \cite{Morales1981}). These models are two parameter families of flows inside a three dimensional region $U$ resembling a butterfly shape, as in Figure~\ref{fig:geometric}. The flows have one singularity $\sigma$ of saddle type, equivalent to the Lorenz singularity at the origin. I.e., the linearization of any such flow has three eigenvalues at the origin, satisfying
$\lambda_3<\lambda_2< 0 <(\lambda_1+\lambda_2)<\lambda_1$.
The flows all have a rectangular cross section $R$, for which the first return map, who's image is the two shaded triangles in the figure. The two parameters determine the location of the tips of the triangles (equivalently, determine the kneading sequence for the unstable manifold of the origin), and the length of the triangles (the amount by which the $x$ direction is stretched). The first return map is discontinuous along a line (the intersection of the stable manifold of the origin with $R$), but is piecewise $C^2$ and  can be proven to be partially hyperbolic. 
The geometric models posses a Lorenz attractor mimicking the one for the original equations around the classical parameters.
The Lorenz equations motivated also extensions  of the classical geometric models \cite{luzzatto2000positive}, modeling the behaviour of the equations where the return map is more complicated.

\begin{figure}[ht]
\centering
\includegraphics[width=9cm]{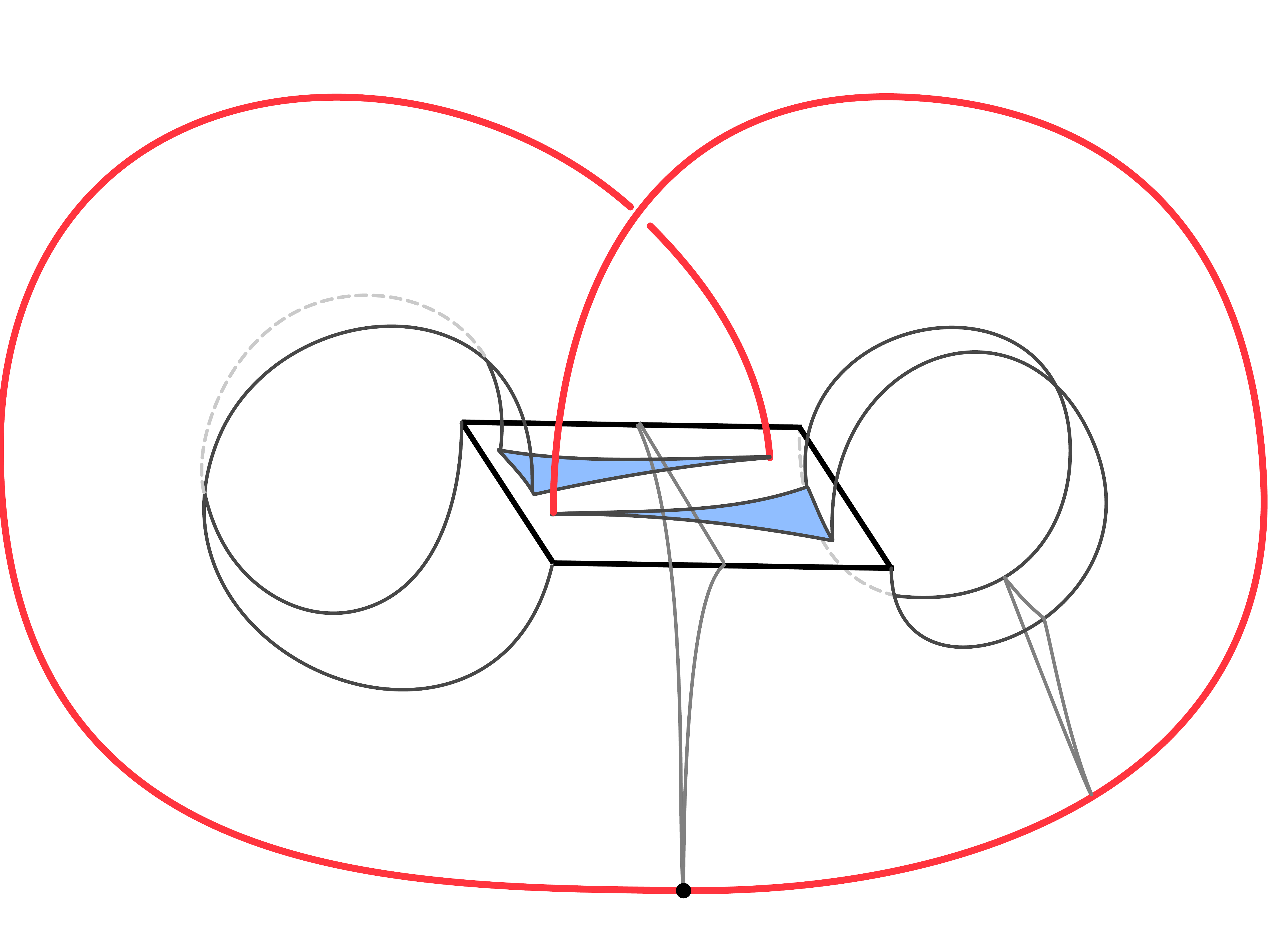}
\caption{The geometric Lorenz model. We may choose coordinates so that the singularity $\sigma$ is at the origin, the Poincare section $R$ has a fixed $z$ coordinate with some $z_R>0$, and the stable folliation of $R$ (induced by the dominated splitting) is parallel to the $y$ axis (the leaves have fixed $x$ coordinate).}\label{fig:geometric}
\end{figure}

The region $U$ on which the classical geometric flows are defined has two regions in its boundary that are homeomorphic to two annuli (the boundaries on the inside of the two butterfly wings). 
Along these annuli, the flow is transverse to $\partial U$ and directed into $U$.
Thus, one can complete the dynamics into a flow on a three ball  $B^3$ by adding two s-index  1 saddle points $p^-$ and $p^+$, one in each wing centre, together with neighbourhoods $N(p^-)$ and $N(p^+)$ around them, so that the stable manifold for $p^\pm$ does not intersect $U$, and the boundary of each neighbourhood is glued to one of the annuli in $\partial U$. We add $p^-$ on the left in Figure~\ref{fig:geometric}, and $p^+$ on the right. The result is a smooth flow on the ball $B^3$, flowing outwards from $N(p^\pm)$ and into $U$ through their joint boundaries.
We put the singularities $p^{\pm}$ to have the same $z$ value (``height'') as the rectangle $R$.

Next, perform a Hopf bifurcation on the saddles $p^-$ and $p^+$ within their neighbourhoods $N(p^-)$ and $N(p^+)$, without modifying the Lorenz attractor.
The effect of such a bifurcation is to transform the saddles $p^\pm$ into sinks $\omega^\pm$ and adding to each of a hyperbolic periodic orbit $\gamma^\pm$ encircling it.

The resulting flow in $B^3$ can then be completed to a flow on $S^3$:
First add a tubular neighbourhood with a flow of the origin and its unstable manifolds, so that the flow enters the neighbourhood. Any flowline continues adjacent to the unstable manifold of the origin, until it enters $B^3$ through $R$.

Next, glue the flow to a 3-dimensional ball which is a neighborhood of a source (``at infinity"), obtaining a flow on $S^3$. 

We next construct a family of flows that continuously passes between the completed geometric model above and a system that possesses a trefoil connection,  so that the dominated splitting holds for any given flow from the family.

\begin{theorem}\label{thm:construction}
There exists a one parameter family of flows $\{X_r\}_{r\in[-1,1]}$,  satisfying the following conditions:
	\begin{itemize}
	\item For any $r<0$, $X_r$ has a chain recurrent set consisting of one Lorenz attractor, one source, two sinks and two periodic saddles (therefore, it is a singular hyperbolic system)
	\item The flow $X_0$ has four chain recurrent classes: one source, two sinks and one singular hyperbolic class containing the periodic saddles $\gamma^-_{0}$ and $\gamma^+_{0}$, and the singularity $\sigma_0$ of Lorenz type.\\
	The invariant manifolds for $\sigma_0$ and $\gamma^-_{0}$,$\gamma^+_{0}$ satisfy: 
	\begin{align*}
	 W^u(\gamma^\pm_{0})\pitchfork W^s(\sigma_0)\ ;\  W^u_+(\sigma_0)\subset W^s(\gamma^+_{0}),  W^u_-(\sigma_0)\subset W^s(\gamma^-_{0}),
	 \end{align*}
	 where $W_\pm^u(\sigma_0)$ are the two components of $W^u(\sigma_0)\setminus\sigma_0$.
	 \item For $r>0$, $X_r$ has five chain recurrence classes: one hyperbolic basic set which is a suspension of the fake horseshoe whose corner orbits are $\gamma^-_{r}$, $\gamma^+_{r}$, one source $\alpha_r$, two sinks $\omega^-_{r}$, $\omega^+_{r}$, and one saddle $\sigma_r$, such that:
	 \begin{align*}
	 W_+^u(\sigma_r)\subset W^s(\omega^+_r), \, W_-^u(\sigma_r)\subset W^s(\omega^-_r)
	 \end{align*}
Furthermore, there exists a $C^1$ embedded simple closed curve, invariant under $X_r$, isotopic to a trefoil knot, consisting of $\alpha_r$, $\omega^-_{r}$, $\omega^+_{r}$, $W^u(\sigma_r)$ and $c_+\cup c_-$ where $c_\pm$ is an arc connecting $\alpha_r$ to $\omega^\pm_{r}$.
	\end{itemize}	
	\end{theorem}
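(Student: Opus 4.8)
The plan is to realize the family $\{X_r\}$ by varying a single \emph{kneading} parameter in the completed geometric model that controls the forward images of the two branches $W^u_\pm(\sigma)$ relative to the stable manifolds $W^s(\gamma^\pm)$ of the two periodic saddles produced by the Hopf bifurcation. Concretely, I would fix the butterfly region $U$, the cross-section $R$ with its dominated splitting, and the one-dimensional Lorenz quotient map $f_r$, and let $r$ translate the tips of the two image triangles, equivalently shift the itineraries of the critical values $f_r(0^+)$ and $f_r(0^-)$. Since the stable foliation (leaves $\{x=\text{const}\}$) and the strong expansion in the unstable ($x$) direction are structurally robust features of the return map, the dominated splitting is preserved for the whole family; I would verify this first, via the standard cone-field criterion, so that partial hyperbolicity is never lost as $r$ varies. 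The qualitative change across $r=0$ is then entirely a change in the chain-recurrent structure, not in the splitting.

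Next I would treat the three regimes separately. For $r<0$ the tips of the triangles remain on the attractor side of the separating surfaces $W^s(\gamma^\pm)$, so the branches $W^u_\pm(\sigma)$ stay recurrent and the maximal invariant set in $U$ is the full Lorenz attractor; the sinks $\omega^\pm$, the periodic saddles $\gamma^\pm$, and the source at infinity are the remaining chain classes, yielding the asserted singular-hyperbolic decomposition. At $r=0$ the tips are arranged to land exactly on $W^s(\gamma^\pm)\cap R$, which is precisely the pair of connections $W^u_+(\sigma_0)\subset W^s(\gamma^+_0)$ and $W^u_-(\sigma_0)\subset W^s(\gamma^-_0)$; the transversality $W^u(\gamma^\pm_0)\pitchfork W^s(\sigma_0)$ then follows because the trace of $W^u(\gamma^\pm_0)$ on $R$ is carried by the expanded direction and therefore crosses the stable leaf $W^s(\sigma_0)\cap R=\{x=0\}$ transversally. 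Showing that this forces the Lorenz attractor and the two periodic saddles to merge into a single singular-hyperbolic chain class at $r=0$ is the delicate point here, and must be checked directly through the return dynamics on $R$.

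For $r>0$ the tips cross to the sink side of $W^s(\gamma^\pm)$, so both branches fall into the basins of $\omega^\pm$, giving $W^u_\pm(\sigma_r)\subset W^s(\omega^\pm_r)$ and destroying the attractor; the saddle $\sigma_r$ then becomes an isolated chain class. What survives as the recurrent part is the maximal invariant set disjoint from the two sink basins, and I would show that its first return map to a suitable sub-section is conjugate to the fake horseshoe $F$ of Figure~\ref{fig:Fake}, matching the two Markov rectangles with the two wing-branches of the return and the two corner periodic orbits of $F$ with $\gamma^\pm_r$. Since the splitting is already established, this invariant set is a hyperbolic basic set and its suspension is the asserted class. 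I expect the main obstacle to be exactly this identification: one must verify that removing the two escaping branches converts the Lorenz-type ``two triangles'' return into the orientation- and foliation-preserving two-rectangle map of the fake horseshoe, with the correct gluing of unstable leaves, rather than into some other hyperbolic model.

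Finally I would build the invariant trefoil for $r>0$. The curve is assembled as $\alpha_r \xrightarrow{\,c_+\,} \omega^+_r \xleftarrow{\,W^u_+(\sigma_r)\,} \sigma_r \xrightarrow{\,W^u_-(\sigma_r)\,} \omega^-_r \xleftarrow{\,c_-\,} \alpha_r$, a simple closed curve through the source, the two sinks and the saddle, where $c_\pm$ are the flow arcs joining the source at infinity to $\omega^\pm_r$. The $C^1$ smoothness at the four singular points follows by choosing the local linearizations so that the incoming and outgoing arcs are tangent to the appropriate weak directions, and the knot type is computed from the embedding data of the completed geometric model: the two branches of $W^u(\sigma_r)$ thread the two butterfly wings exactly as the heteroclinic connections of the classical Lorenz flow do at a T-point, so a diagrammatic count of the crossings produced by the two wings identifies the resulting knot with the trefoil.
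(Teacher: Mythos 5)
Your proposal follows essentially the same route as the paper: deform the completed geometric model by translating the triangle tips (equivalently the kneading data) across the stable manifolds of the two Hopf-created periodic saddles, analyze the three regimes $r<0$, $r=0$, $r>0$ via the return map on $R$, identify the surviving recurrent set for $r>0$ with the fake horseshoe using the corner orbits $\gamma^\pm_r$, and close up the trefoil with the arcs $c_\pm$ from the source to the sinks. The points you flag as delicate (the merging of the attractor with $\gamma^\pm$ at $r=0$, and the canonical choice of $c_\pm$) are exactly the ones the paper handles, respectively by placing the triangle bases on the stable edges of $\partial R$ and by exhibiting the invariant two-sphere $W^u(\gamma^+)\cup W^s(\gamma^+)$ bounding a north-to-south ball.
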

	
\begin{proof}
Fix a flow $X_{-1}$ to be the completion of the classical geometric model as above, starting with a geometric model with any parameters so that the tip of one triangle has the same $y$ coordinate as the base of the other triangle, and the model is symmetric with respect to half a rotation about the $z$ axis.
Perform the Hopf bifurcation to the extent that the periodic saddles $\gamma^\pm$ are contained in $\partial U\cap\partial N(p^\pm)$.

Define the flow $X_s$ to be the one resulting from $X_{-1}$ by pulling the triangle tips and bases along the $x$ direction, while otherwise keeping the structure of the flow and first return map.
In particular, keep the tips and triangles on the same $y$ and $z$ coordinates.
For $z<0$, the tips and bases are contained in $R$, getting closer to the boundary as $s$ increases.
For $z=0$ let the tips and bases intersect $\partial R$.
For $z>0$, push the tips continuously further into the neighbourhoods $N(p^-)$ and $N(p^+)$, leaving the bases on $\partial R$.

\noindent\underline{For $r<0$:} The theorem follows immediately, as the flow is a completion as above for the classical geometric model for some parameters.

\noindent\underline{For $r=0$:} In the deformation, we do not alter the fact that the triangles which are the images of the two sub-rectangles in $R$ intersect near their bases the two sides of $\partial R$ which are part of the stable foliation. Thus, the corner orbits $\gamma^\pm$, which are the unique periodic orbits on these stable leaves, are contained in these two images and are part of the singular hyperbolic chain recurrent class.

The tips of the triangles, that are part of $W^u(\sigma_0)$ are on the stable edges of  $R$ as well. It follows that these edges are part of the stable manifolds of the periodic saddles $W^s(\gamma^\pm_0)$ and thus $W^u_+(\sigma_0)\subset W^s(\gamma^+_{0})$ and  $W^u_-(\sigma_0)\subset W^s(\gamma^-_{0})$.

The unstable manifolds of the saddles, on the other hand, continue parallel to the $x$ axis into $R$. As $R\cap W^s(\sigma_0)$ is exactly the intersection of $R$ with the line $x=0$, the intersection of the invariant manifolds within $R$ are transverse.
As $R$ is a section for the flow through the singular hyperbolic piece, it follows they intersect transversally throughout.

Note that the basin of attraction of this class contains a neighborhood of $\sigma_0$. This basin is not open: it is bounded by the closure of the stable manifolds of the $\gamma_0^\pm$.

The action of the deformation on the return map is shown in Figure~\ref{fig:Section}.

\begin{figure}[ht]
\subfigure[$r<0$]{%
\begin{overpic}[width=7cm]{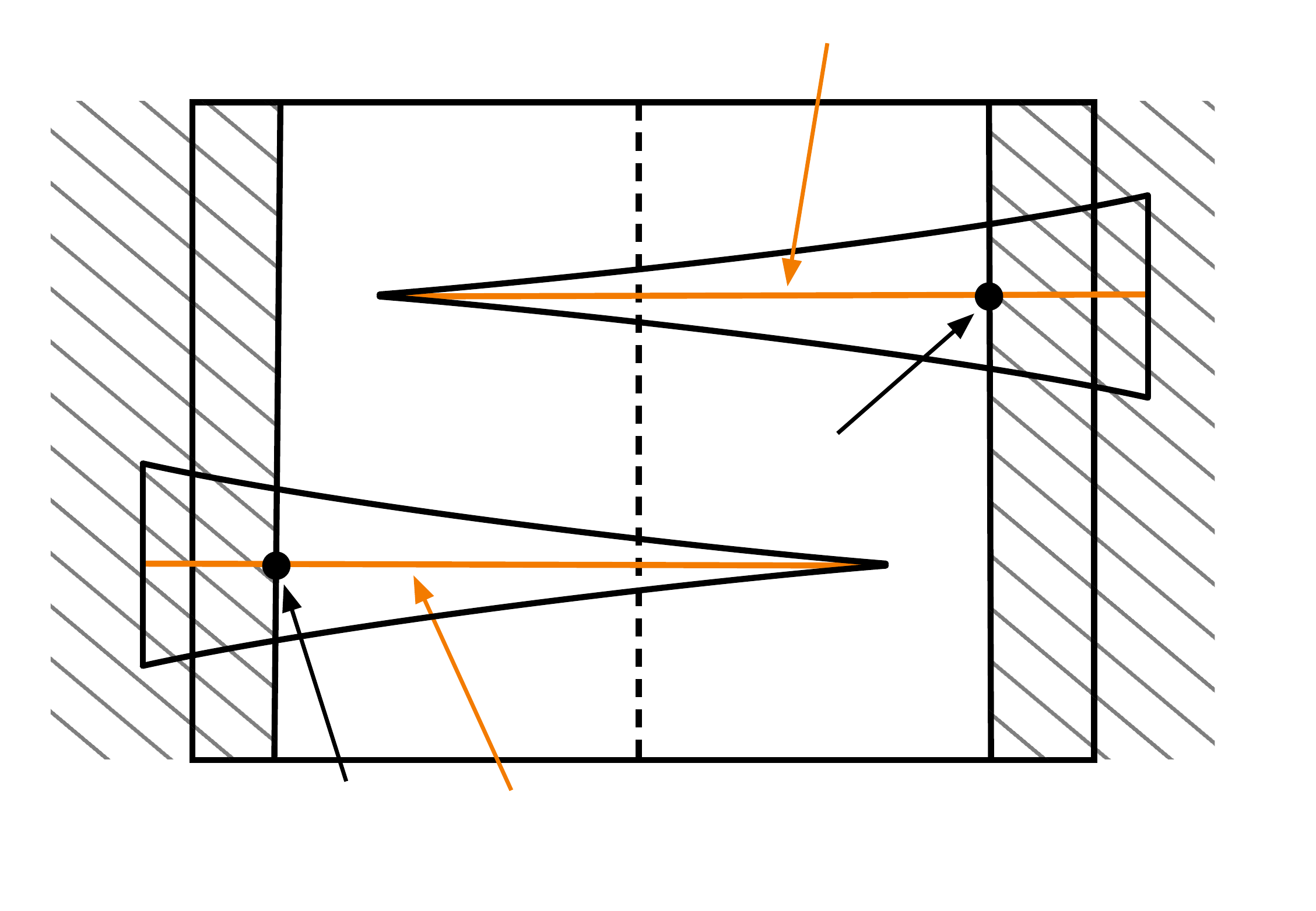}
\put(40,64){$W^s(\sigma_r)$}
\put(-3,45){$W^s(\omega^-_r)$}
\put(85,25){$W^s(\omega^+_r)$}
\put(61,68){$W^u(\gamma_r^+)$}
\put(59,31){$\gamma_r^+$}
\put(34,4){$W^u(\gamma_r^-)$}
\put(22,5){$\gamma_r^-$}
\end{overpic}}
\subfigure[$r=0$]{%
\begin{overpic}[width=7cm]{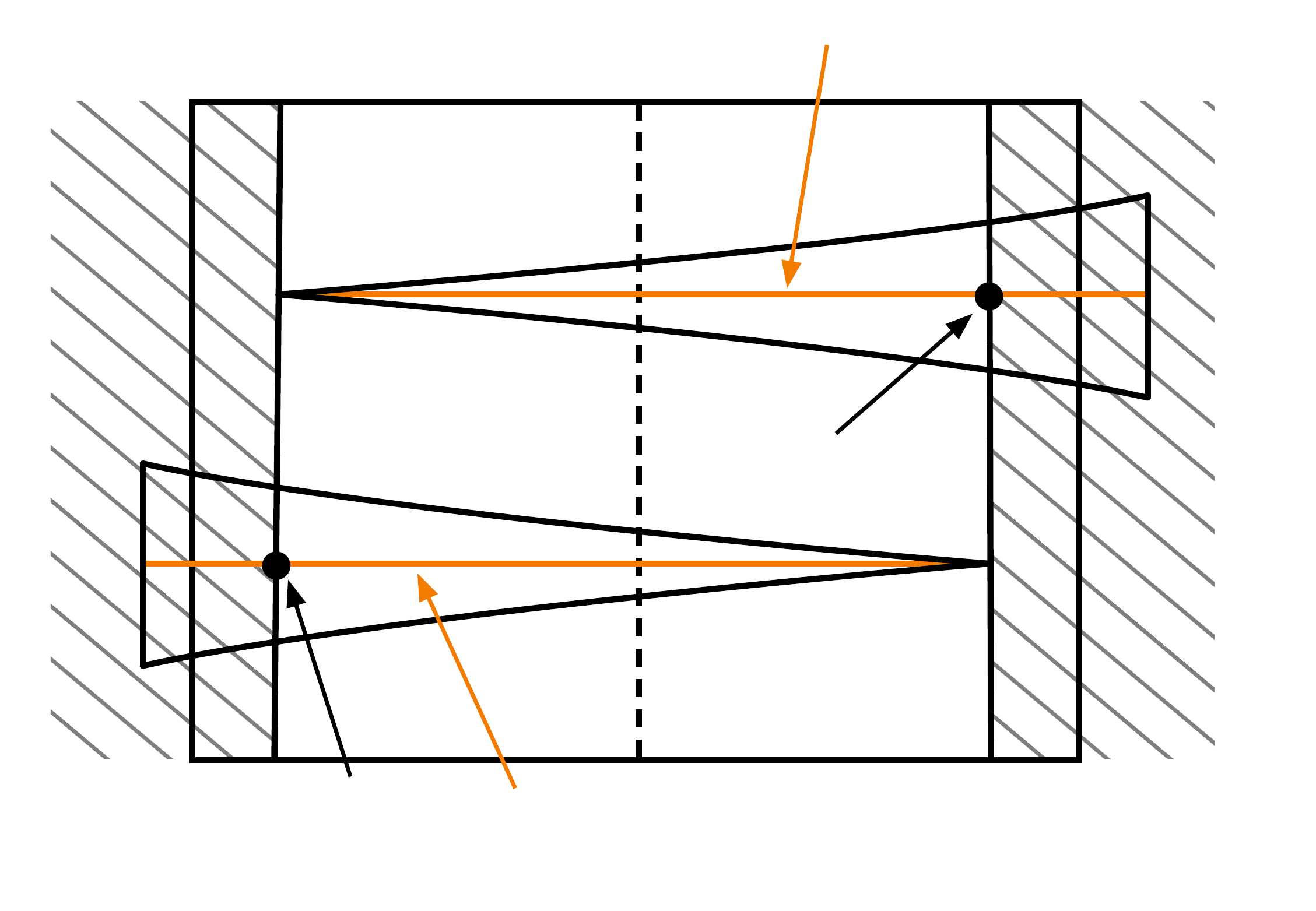}
\put(40,64){$W^s(\sigma_r)$}
\put(-3,45){$W^s(\omega^-_r)$}
\put(85,25){$W^s(\omega^+_r)$}
\put(61,68){$W^u(\gamma_r^+)$}
\put(59,31){$\gamma_r^+$}
\put(34,4){$W^u(\gamma_r^-)$}
\put(22,5){$\gamma_r^-$}\end{overpic}}
\subfigure[$r>0$]{%
\begin{overpic}[width=7cm]{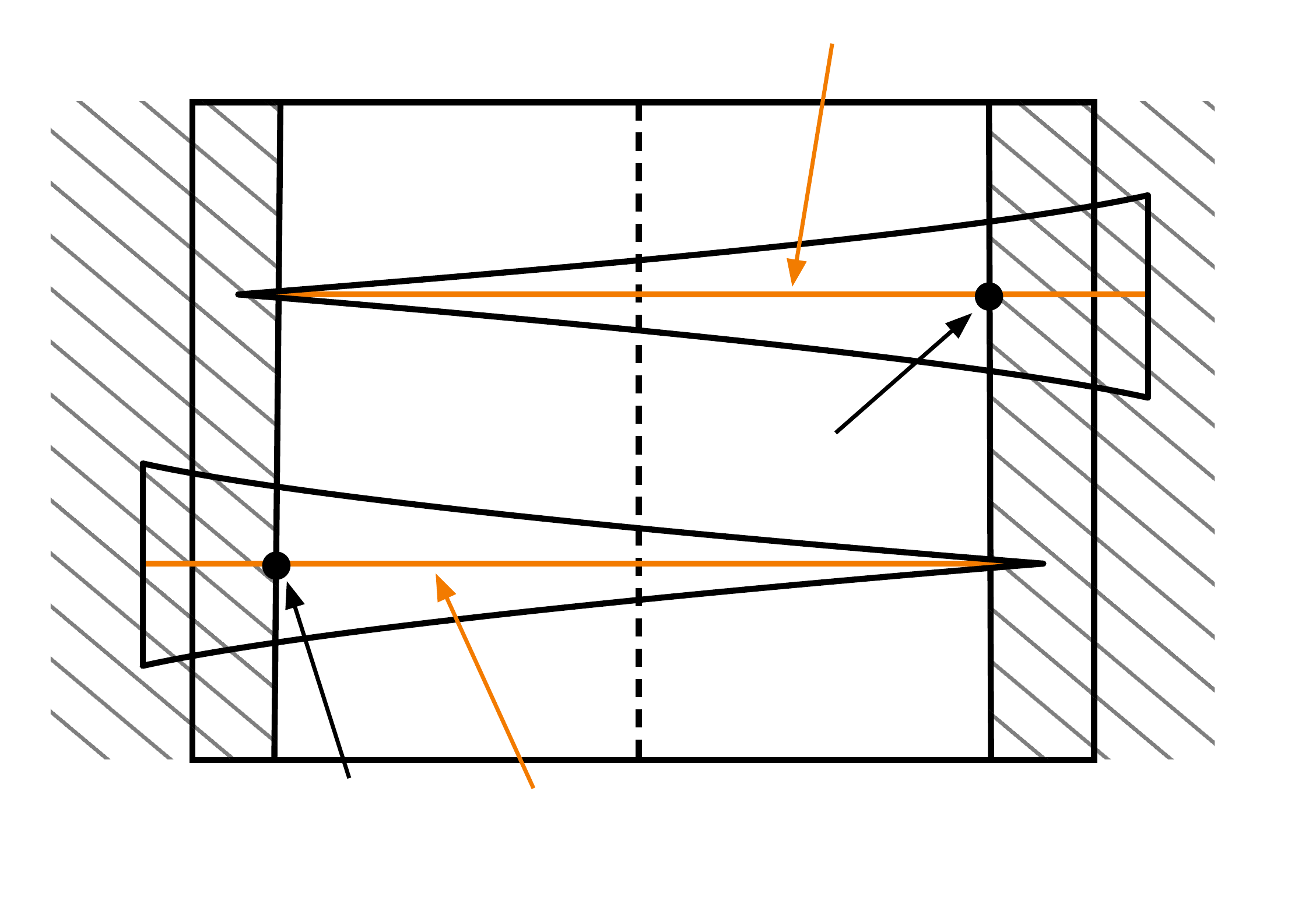}
\put(40,64){$W^s(\sigma_r)$}
\put(-12,50){$W^s(\omega^-_r)$}
\put(85,25){$W^s(\omega^+_r)$}
\put(61,68){$W^u(\gamma_r^+)$}
\put(59,31){$\gamma_r^+$}
\put(34,4){$W^u(\gamma_r^-)$}
\put(22,5){$\gamma_r^-$}
\end{overpic}}
\caption{The return map on the cross-section for $X_r$.}
\label{fig:Section}
\end{figure}

\noindent\underline{For $r>0$:} Here we pull the tips of the triangles further, until they enter the neighbourhoods $N(p^\pm)$, thus they become part of the domain of attraction of the two sinks.
This situation is depicted in Figure~\ref{fig:Geometric2}.
This implies that there exists a small neighbourhood of the saddle $\sigma_r$ so that the $\omega$ limit set of any point in it is either $\sigma_r$ or one of the sinks $\omega^\pm$.
In particular, there is a neighbourhood of the line $x=0$ in $R$ that does not return to $R$. Thus the nonwandering set in $R$ breaks down into a Cantor set. The recurrent set has a geometric type given by the fake horseshoe $F$ (see Figure~\ref{fig:Geometric2}).

\begin{figure}[ht]
\centering
\begin{overpic}[width=12cm]{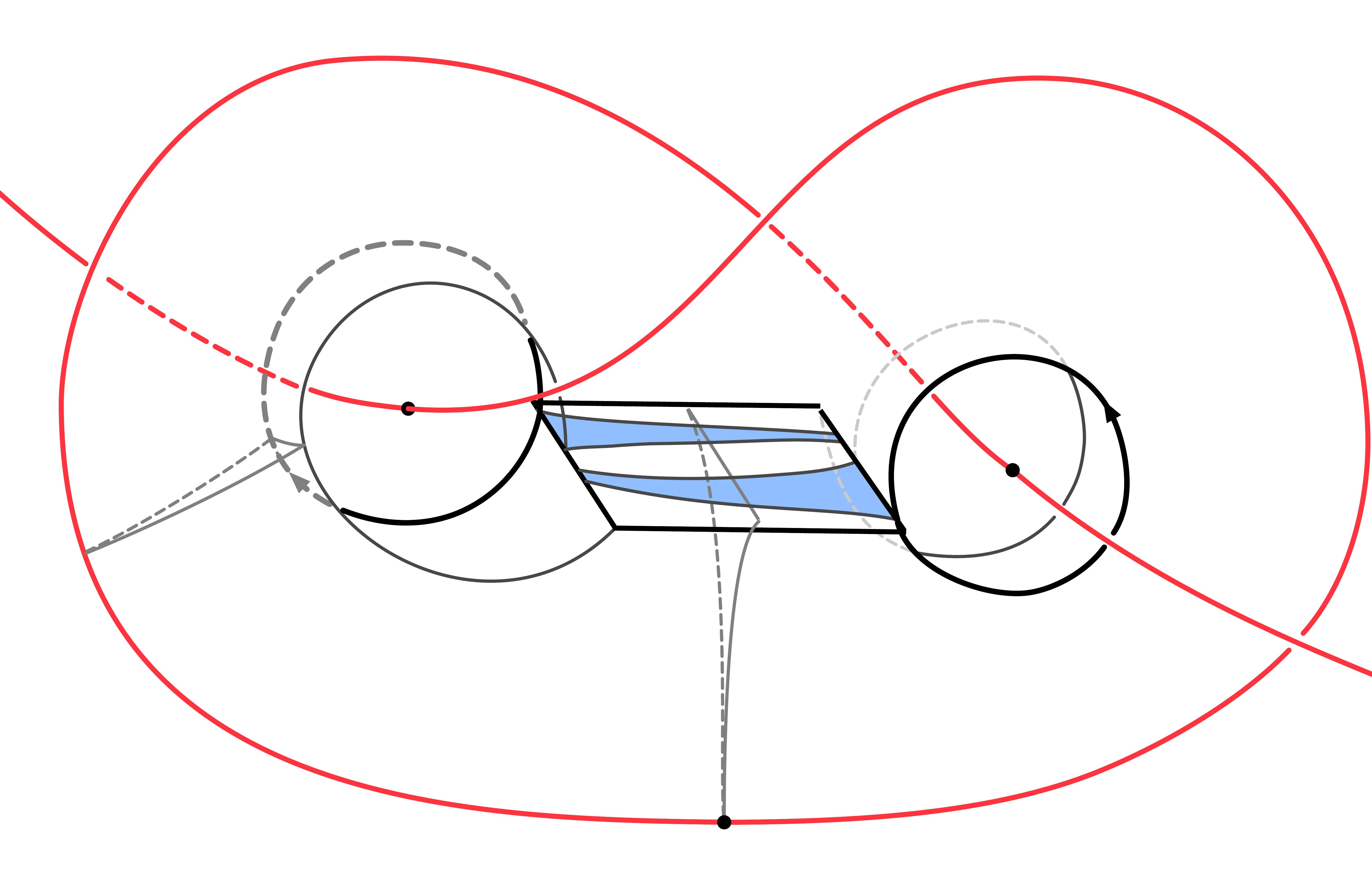}
\put(52,1){$\sigma_r$}
\put(74,31){$\omega_r^+$}
\put(82,35){$\gamma_r^+$}
\end{overpic}
\caption{The new geometric model $X_r$.}\label{fig:Geometric2}
\end{figure}

The fact the bases of the two triangles are on the edges of $R$ implies that $\gamma^\pm$ are the corner orbits for this fake horseshoe.
As we choose the convention that the triangle on the back is pointing to the right, $W^u_+(\sigma_r)$ connects to $\omega^+_r$ from the back of the figure, transversally to the two dimensional manifold $W^u(\gamma^+)$ which within the neighbourhood $N(p^+)$ can be identified with the $y=0$ plane, see Figure~\ref{fig:Cone}. Thus to continue it smoothly we must continue it by a curve on the front side of $W^u(\gamma^+)$. On the other hand, $W^s(\gamma^+)$ is a cone connecting $\gamma^+$ to the source $\alpha_r$. 
Hence, the union of $W^u(\gamma^+)$ and $W^s(\gamma^+)$ together with $\sigma_r$ and $\omega^+_r$ is a two sphere as in Figure~\ref{fig:Cone}. 
The sphere bounds a three ball in which the flow is equivalent to a north to south flow from $\alpha_r$ to $\omega^+_r$. $c_+$ is a flowline of this flow in the ball, and all flowlines are isotopic. Therefore, there is a canonical way to add a curve $c_\pm$ connecting $\alpha_r$ and $\omega^+_r$ so that it is invariant under the flow and connects smoothly to $W^u_+(\sigma_r)$.

\begin{figure}[ht]
\centering
\begin{overpic}[width=7cm]{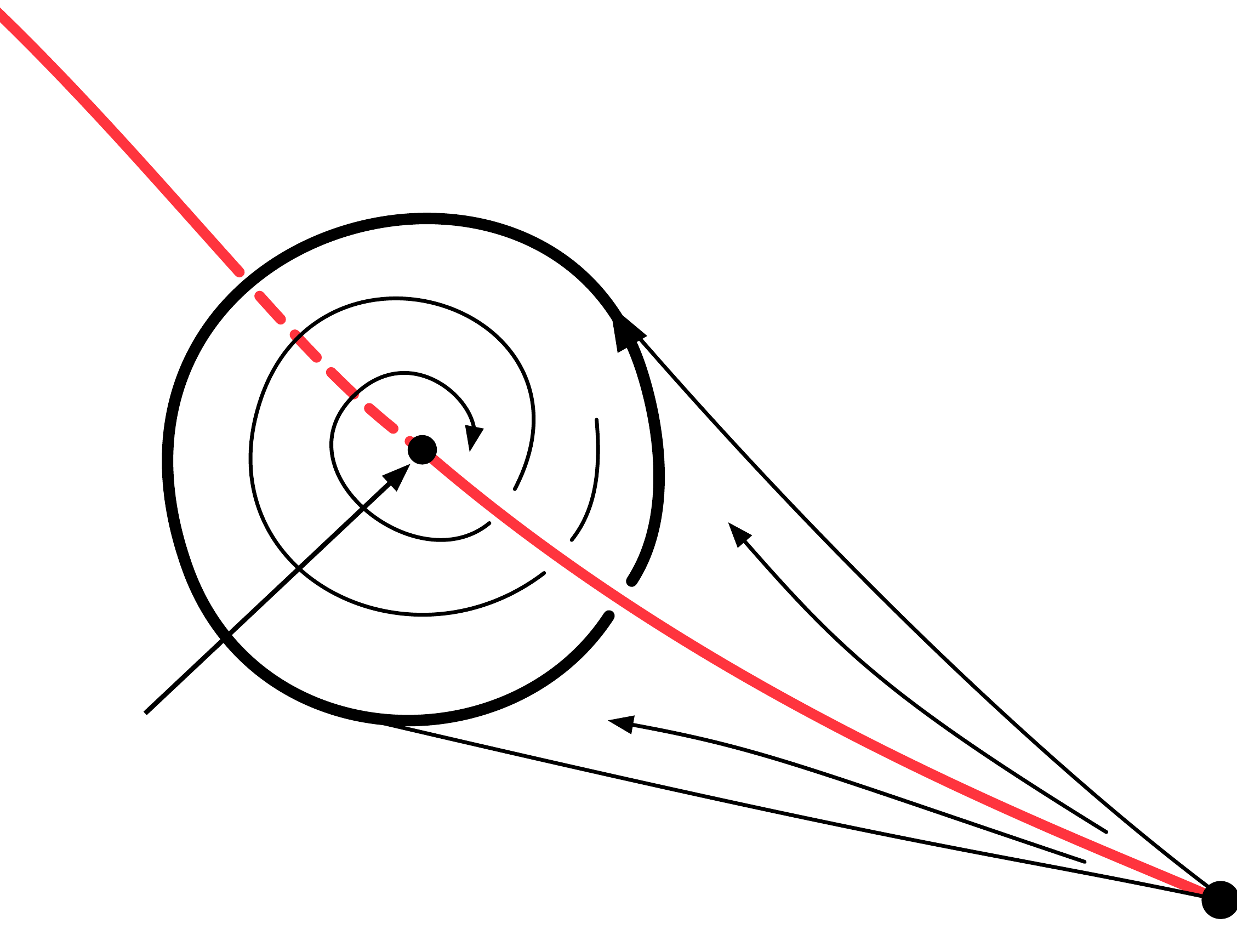}
\put(102,2){$\alpha_r$}
\put(50,54){$\gamma^+_{r}$}
\put(6,14){$\omega_r^+$}
\end{overpic}
\caption{The sphere enclosing the curve $c_+$, which is therefore unique up to isotopy.}\label{fig:Cone}
\end{figure}

Said differently,  $\gamma_+\cup\omega_+$, cuts $W^s(\omega_r^+)$ into two connected components.
One of these components is contained in the unstable basin of $\alpha_r$, the other contains one untable separatrix of $\sigma_r$.  Thus $c_+$ is any orbit in the component which is entirely contained in $W^u(\alpha_r)$.
Similarly, the choice of $c_-$ is a curve connecting $\alpha_r$ to $\omega_r^-$ in the symmetric sphere containing $W^u(\alpha_r)\cap W^s(\omega_r^-)$.

Considering again Figure~\ref{fig:Geometric2}, one finds that $\sigma_r\cup W^u(\sigma_r)\cup\omega^-_r\cup\omega^+_r\cup c_-\cup c_+\cup \alpha_r$ is a trefoil knot.
\end{proof}

\begin{remark}
This construction is not unique because, when $r<0$ the Lorenz attractor depends on two parameters (essentially the itinerary of both unstable separatrices of $\sigma$) whence our family depends on a single parameter.
We could turn it into a canonical construction (up to topological equivalence) by considering a two parameters family.  For such a family, the parameter $r=0$ will correspond to the point of intersection of $W^u(\sigma)$ with both $W^s(\gamma^+)$ and $W^s(\gamma^-)$.
\end{remark}


\subsection{Removing the heteroclinic trefoil}

In this section we show that for $r>0$ we can remove a solid torus from $S^3$ so that $X_r$ becomes a nonsingular Axiom A flow on the trefoil complement.
Let $K_r$, $r>0$, be the trefoil knot invariant under $X_r$ built in Theorem~\ref{thm:construction}.

\begin{proposition}\label{pro:Birkhoff}
For any $r>0$, there is a tubular neighborhood  $\Gamma_r$ of $K_r$, varying continuously with $r$, so that the boundary $\partial \Gamma_r$ is a Birkhoff torus $T_r$:  it consist in 2 annuli each of them
bounded by the two periodic orbits $\gamma_r^+$ and  $\gamma_r^-$, whose interiors are transverse to the vector field $X_r$. $X_r$ points into $\Gamma_r$ along one of these open annuli and points outwards $\Gamma_r$ along the other annulus.  Finally, $S^3 \setminus \text{Interior}( \Gamma_r)$ contains the chain recurrent set of $X_r$ except for the four singular points.
\end{proposition}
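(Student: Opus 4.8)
The plan is to take $\Gamma_r$ to be a regular (tubular) neighborhood of the invariant trefoil $K_r$, assembled from explicit local models near the four singular points and along the connecting arcs, and then to read off the decomposition of $T_r=\partial\Gamma_r$ into two transverse annuli. Since $K_r$ is a knot, any regular neighborhood is automatically a solid torus with torus boundary, so the genuine content is only the transversality of the boundary to $X_r$ and the precise location of the tangency locus, which must turn out to be $\gamma_r^+\cup\gamma_r^-$.

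First I would fix the incoming/outgoing pattern forced by the local dynamics. Near $\sigma_r$, in linearizing coordinates with $W^u_\pm(\sigma_r)$ along the $x$-axis and $W^s(\sigma_r)$ the $(y,z)$-plane, the lateral boundary $\{y^2+z^2=\varepsilon^2\}$ of a thin tube is crossed \emph{inward}, since the transverse directions are the two contracting eigendirections; the same holds along the entire arc $W^u_-(\sigma_r)\cup\{\sigma_r\}\cup W^u_+(\sigma_r)$ running between the two sinks, because these separatrices lie in the basins $W^s(\omega^\pm_r)$ and a thin enough tube around them is normally attracting. Symmetrically, near the source $\alpha_r$ a small tube around $c_-\cup\{\alpha_r\}\cup c_+$ is \emph{outgoing}. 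Thus the saddle/sink side of $K_r$ wants an incoming annulus and the source side an outgoing one, and the two patterns have to be reconciled somewhere along the arcs $c_\pm$, which is what produces the two tangency circles.

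The crux, and where I expect the real difficulty, is the local picture at the corner orbits $\gamma_r^\pm$. Here I would use the two-spheres produced in the proof of Theorem~\ref{thm:construction}: $\Sigma^+=\overline{W^u(\gamma_r^+)}\cup\overline{W^s(\gamma_r^+)}$ bounds a ball $B^+$ on which $X_r$ is a north--south flow from the source $\alpha_r$ to the sink $\omega_r^+$, with axial orbit $c_+$, and on which $\gamma_r^+$ is exactly the \emph{equator} $\overline{W^s(\gamma_r^+)}\cap\overline{W^u(\gamma_r^+)}$ (the unique periodic orbit on $\Sigma^+$). For a north--south ball, a boundary sphere enclosing both poles is crossed outward around the source and inward around the sink, with a single fold tangency along the equator. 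The delicate step is to realize this boundary as $\Sigma^+$ pushed slightly \emph{outward} near the two poles (so that $\alpha_r$ and $\omega_r^+$ become interior to $\Gamma_r$) while being pinned exactly on $\gamma_r^+$, so that $\gamma_r^+$ itself survives on $\partial\Gamma_r$ as the fold. Since $X_r$ is tangent to the invariant sphere $\Sigma^+$ but genuinely transverse to the level sets it sweeps out on either side, a suitable profile for this push gives a quadratic fold along $\gamma_r^+$ with outgoing flow on the source side and incoming flow on the sink side; the same is carried out at $\gamma_r^-$ using $\Sigma^-$ and $B^-$. Verifying that this push is transverse arbitrarily close to $\gamma_r^\pm$ with the stated signs is the technical heart of the argument.

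It then remains to glue and check the global conclusions. Taking $\Gamma_r$ to be the union of a small source-ball around $\alpha_r$, the two balls $B^\pm$ truncated to tubes about $c_\pm$ out to their equatorial level, a thin incoming tube about $W^u_-(\sigma_r)\cup\{\sigma_r\}\cup W^u_+(\sigma_r)$, and sink-balls around $\omega_r^\pm$, with the boundary modified near $\gamma_r^\pm$ as above, the circles $\gamma_r^+,\gamma_r^-$ are two parallel meridians of the solid torus $\Gamma_r$; cutting $T_r$ along them yields exactly two annuli, the incoming one passing through $\sigma_r$ and the outgoing one through $\alpha_r$, each bounded by $\gamma_r^+\cup\gamma_r^-$, which is the asserted Birkhoff structure. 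Finally, since $K_r$ consists of the four singularities together with $W^u(\sigma_r)$ and $c_\pm$ — all lying in the singular invariant manifolds or the sink basins — and since the suspended fake horseshoe is a compact hyperbolic set meeting $K_r$ only along $\gamma_r^\pm\subset\partial\Gamma_r$, choosing the tube sufficiently thin along the arms places the whole chain recurrent set except the four singular points in $S^3\setminus\mathrm{Interior}(\Gamma_r)$. All these choices depend continuously on $r>0$ because the invariant manifolds, the spheres $\Sigma^\pm$ and the orbits $\gamma_r^\pm$ do, giving the required continuous family $\Gamma_r$.
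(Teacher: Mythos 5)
Your construction is essentially the paper's own argument: you assemble $T_r$ from local pieces that are transverse to $X_r$ (incoming along the tube around $W^u(\sigma_r)$ and the sinks, outgoing around $\alpha_r$ and $c_\pm$) and force the tangency locus to be exactly $\gamma_r^\pm$ by placing the surface in the two opposite quadrants between the local stable and unstable manifolds of these orbits, which is precisely what your push-off of the invariant spheres $\Sigma^\pm$ produces and what the paper describes as the torus crossing ``diagonally between the stable and the unstable manifolds'' in Figure~\ref{fig:Angle}. One caution on your last step: near $\gamma_r^\pm$ ``choosing the tube sufficiently thin'' does nothing, since $T_r$ is pinned to $\gamma_r^\pm$ and the suspended horseshoe accumulates there (and note $\gamma_r^\pm\not\subset K_r$); what actually keeps the basic set out of $\mathrm{Interior}(\Gamma_r)$ is your quadrant placement --- the horseshoe lies locally in the closed quadrant opposite the one absorbed into $\Gamma_r$ --- whereas the paper instead verifies the containment by cutting $\Gamma_r$ along the invariant manifolds of $\gamma_r^\pm$ and $\sigma_r$ and checking that every orbit inside limits on a singularity.
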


\begin{proof}
For each of the orbits $\gamma^\pm$, its unstable manifold is directed towards the singularity it encircles. its stable manifold is tangent to the edge of the rectangular cross section nearest to the same singularity. 

Define the torus $T_r$ as follows: Let $T_r$ be a boundary of a neighbourhood of the trefoil heteroclinic knot for $X_r$, so that $\gamma_r^{\pm}\subset T_r$. Choose $T_r$ so that near the singularity $\omega_r^+$ on the right in Figure \ref{fig:Geometric2} it is wider on the front of $\gamma_r^+$ and narrower at the back, so that it crosses the plane containing the cross section diagonally between the two stable manifolds, as in Figure~\ref{fig:Angle}.

\begin{figure}[ht]
\centering
\begin{overpic}[width=5cm]{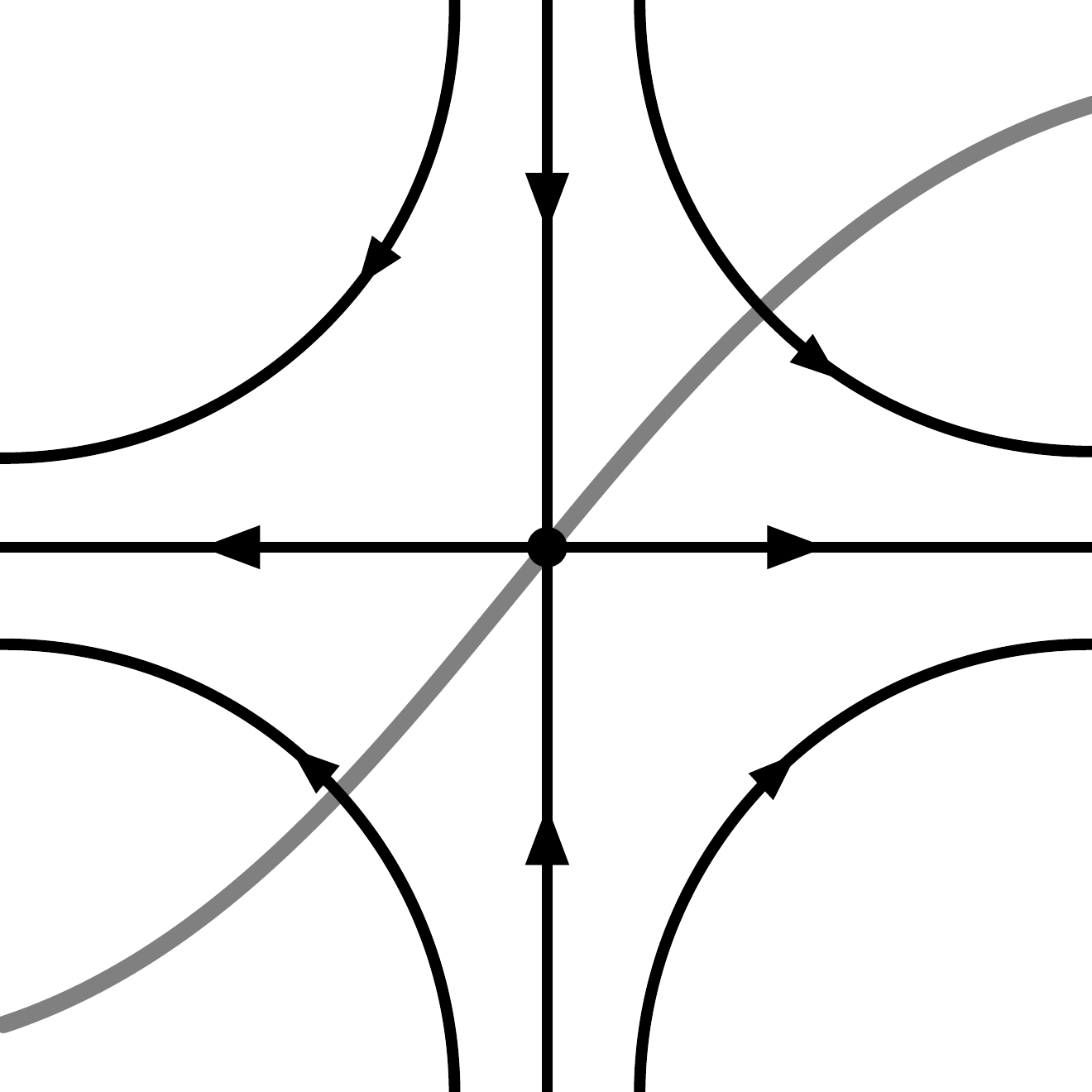}
\put(39,52){$\alpha^+$}
\put(85,75){$T_r$}
\end{overpic}
\caption{A cross section of the orbit $\alpha^+$. The torus $T_r$ passes between the stable and the unstable manifolds of the orbit.}\label{fig:Angle}
\end{figure}

This implies that in a small enough annulus $\mathcal{N}(\gamma_r^+)\cap T_r$ the flow is transverse to $T_r$, except of on its core curve $\gamma_r^+$, flowing into the trefoil neighborhood on the back side of $\gamma_r^+$ and out of the trefoil neighborhood on the front side.

Let $S$ be a neighbourhood of the source $\alpha_r$. In Figure~\ref{fig:Geometric2} the flow is depicted in $\mathbb{R}^3$ by putting $\alpha_r$ at infinity. The flow is transverse to $S$ pointing away from $\alpha_r$. On the front side of Figure~\ref{fig:Geometric2}, one may widen the annulus parallel to the trefoil connection continuing on the front of the figure to infinity. The annulus becomes wider to connect smoothly to the sphere $S$. Note that the outwards flow along this annulus matches the flow through $S$ that also points away from the trefoil (which contains the point at infinity).
Use the symmetry about the $z$ axis to define a similar annulus enclosing the part of the trefoil connecting $\gamma_r^-$ to $\alpha_r$.

Finally, consider a small neighborhood of the saddle $\sigma_r$. An annulus in this neighborhood that encircles the trefoil (parallel to a meridian) will be transverse to the flow, with the flow pointing into the trefoil as the trefoil is tangent to the unstable direction about the origin.
One may extend this annulus, keeping it close enough to the trefoil, until it connects to $A$ while the flow is transverse to the extension and pointing towards the trefoil side of it.
Considering again Figure~\ref{fig:Angle}, one sees the two annuli can be smoothly connected along $\gamma_r^+$, and similarly along $\gamma_r^-$.

Next, consider the solid torus $\Gamma_r$ bounded by $T_r$, depicted in Figure~\ref{fig:Torus}.

\begin{figure}[ht]
\centering
\begin{overpic}[width=13cm]{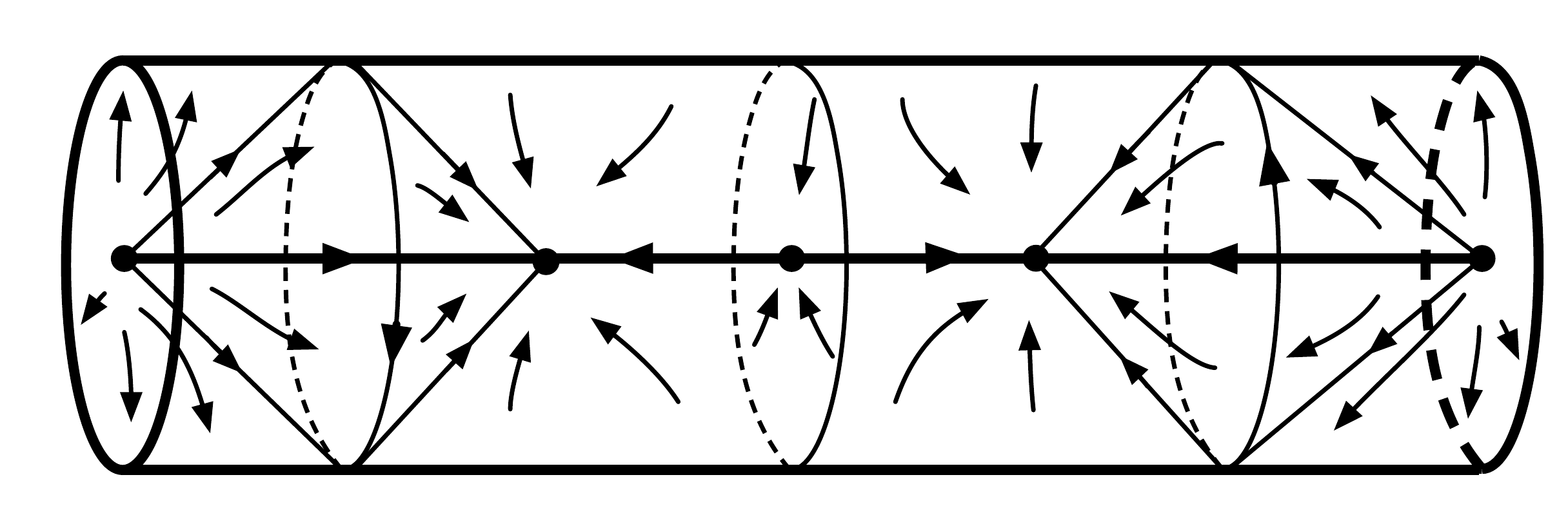}
\put(5,18){$\alpha_r$}
\put(35,18){$\omega_r^-$}
\put(48,18.5){$\sigma_r$}
\put(63,18.5){$\omega_r^+$}
\put(94.5,17.5){$\alpha_r$}
\put(20,-1){$\gamma_r^-$}
\put(48,-1){$W^s(\sigma_r)$}
\put(77,-1){$\gamma_r^+$}
\end{overpic}
\vskip.5cm
\caption{The solid torus $\Gamma_r$ which is the trefoil neighborhood, cut along a disk $D$ containing $\alpha_r$.}\label{fig:Torus}
\end{figure}

$\Gamma_r$ contains the four singular points. The cone like disks in the figure are the invariant manifolds of the tangent orbits $\gamma_r^\pm$. The two dimensional stable manifold of the saddle is also depicted. Together these manifolds divide $\Gamma_r$ into several regions:
\begin{enumerate}

\item The diamond shape regions between the stable and the unstable manifolds of $\gamma_r^\pm$. In this region every point arrives from $\alpha_r$ at $t\rightarrow-\infty$ and limits on $\omega_r^\pm$ at $t\rightarrow\infty$.

\item The region containing $D$, in which every orbit originates from $\alpha_r$ and exits $\Gamma_r$.

\item The region to the right the $\omega_r^-$ cone up to $W^s(\sigma_r)$, in which every orbit emanates on $T_r$ and 
accumulates on $\omega_r^-$ at $t\rightarrow\infty$.

\item The region to the left the $\omega_r^+$ cone up to $W^s(\sigma_r)$, in which every orbit emanates on $T_r$ and 
accumulates on $\omega_r^+$ at $t\rightarrow\infty$.

\end{enumerate}

Note that all orbits contained in the two dimensional invariant manifolds, as well as the connections themselves that are part of the trefoil, are wandering. 
Thus, every orbit in $\Gamma_r$ accumulates onto one of the singular points, as required.
\end{proof}

\begin{definition} Let $A$ be a vector field on $B^3$, so that $A$ is transverse to $\partial{B^3}$ (entering $B^3$), $A$ is Morse Smale, $\Omega(A)$ is composed of 2 sinks and a saddle of s-index 2. The topological equivalence class of $A$ is well defined.
\end{definition} 

We can now prove that the model for the fake horseshoe $F$ is defined on $S^2\times I$:

\begin{proof}[Proof of Corollary \ref{cor:FakeModel}]
Consider again the solid torus $V$, without removing it from $S^3$. Within $V$, there are two transverse spheres for $X^r$, one enclosing a neighbourhood of $\alpha_r$ and one enclosing the three other singularities in $V$.

The flow in one of the three dimensional balls bounded by these two spheres is equivalent to the linearisation about the source. The flow in the other three dimensional ball is equivalent to one given by $A$.
Removing the two 3-balls we arrive at a flow for which the non-wandering set has a section, with $F$ being its geometric type. Denoting this flow by $\psi_F$, it is defined on $S^3\setminus (S^2_0 \cup S^2_1)\cong S^2\times I$.

Thus, $S^2\times I$ is a filtering neighbourhood for $F$.
The two boundaries are transverse to the flow and, since they are both spheres, are clearly of minimal genus. Thus $\psi_F$ on $S^2\times I$ is the canonical model for $F$.
\end{proof}

\section{The modular surface and its geodesic flow}\label{sec:Modular}

The modular surface is the surface $\mathbb{H}^2/\PSL_2(\mathbb{Z})$. It has two cone points and a single cusp. A fundamental domain for this surface is given is depicted in Figure \ref{fig:modular}

\begin{figure}[ht]
\centering
\begin{overpic}[width=5cm]{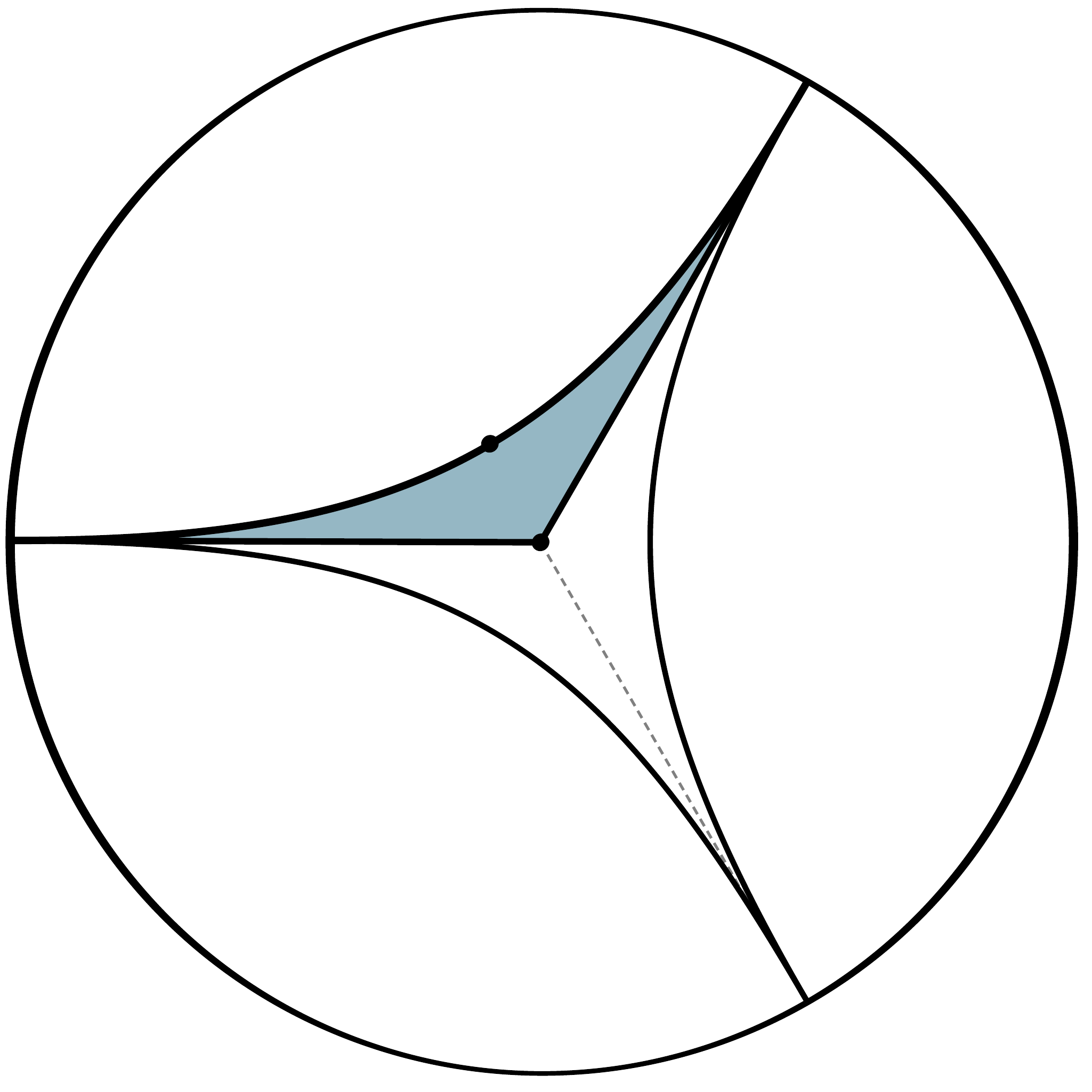}
\put(52,48){$p$}
\put(42,62){$q$}
\end{overpic}
\caption{A fundamental domain for the modular surface.}\label{fig:modular}
\end{figure}

The unit tangent bundle to the modular surface is the space $\PSL_2(\mathbb{R})/\PSL_2(\mathbb{Z})$. It is a Seifert fibered space with two singular fibers corresponding to the two cone points. As a three manifold this space is homeomorphic to the trefoil complement in $S^3$, where the trefoil cusp corresponds to the cusp of the modular surface (see Milnor \cite{Milnor:HyperbolicGeometry}).

On account of the cusp, the geodesic flow on the modular surface $\varphi_{\Mod}$ is not structurally stable,  and it is convenient to switch to a related system, replacing the cusp by a toral boundary. This operation, which we refer to as a compactification is explained in \cite{Ghys:KnotsDynamics} as a deformation  within the representation variety of $\PSL_2(\mathbb{Z})$ in $\PSL_2(\mathbb{R})$. 
First deform the hyperbolic metric on the surface  $\mathbb{H}^2/\PSL_2(\mathbb{Z})$, where the metric is deformed so that  it is still of constant curvature -1 but the cusp turns into a \emph{funnel}, i.e. the surface becomes a hyperbolic surface of infinite area, while keeping the same topology. A fundamental domain for the modular surface with the funnel is shown in Figure~\ref{fig:modular2}.

Note that the element of the fundamental group corresponding to the class of the horocycles in the original metric, contains a unique closed geodesic $h$ in the new metric. The length $l$ of this geodesic defines the new surface $\overline{M}_l$, and the modular surface is the limit $\overline{M}_0$ where $l\rightarrow0$.
The compactification $\varphi^l_{\Mod}$ of the geodesic flow $\varphi_{\Mod}$ on the (compact manifold) $S^3\setminus\text{trefoil}$ is defined as the geodesic flow on the convex core of $\overline{M}_l$. That is, it is the geodesic flow on the surface truncated along the closed geodesic $h$. Note that as any geodesic that crosses $h$ into the funnel never crosses it again, and the same is true in backwards time, the flow $\varphi^l_{\Mod}$ contains the same recurrent dynamics as the geodesic flow on $\mathcal{M}_l$. It follows from their structural stability that the flows $\varphi_{\Mod}^l$ are in the same topological equivalence class for all $l>0$.

The boundary torus $T$ of $S^3\setminus\text{trefoil}$ is the unit tangent bundle to the closed geodesic $h=\partial \overline{M}_l$. Since $h$ is also a closed geodesic, there are two tangent orbits to $T$, $h_0$ and $h_1$, corresponding to traveling along $h$ in both directions.
Since $S^3$ corresponds to the Euler number zero filling of the unit tangent bundle $T^1\overline{M}_l$, see \cite{pinsky2014templates}, the two tangent  orbits are both meridians of the trefoil in $S^3$. These two meridians divide $T$ into two annuli. One of the annuli corresponds to points on the closed geodesic $h$ and tangent direction pointing out of the surface, and the other to points on $h$ and tangent direction pointing into the surface. The flow therefore is transverse to the torus $T$ off the two tangent orbits, i.e. $T$ is a Birkhoff torus, and flows inwards through one annulus and outwards through the other. 

Next consider the recurrent set for the flow. 
The fundamental group for $\overline{M}_l$ is isomorphic to $\PSL_2({\mathbb{Z}})$ and is generated by a rotation of order three around the point $p$ which we depict as the center of the Poincar\'e disk, and a rotation of order two around a point $q$. Let $l_0$ be the hyperbolic geodesic perpendicular to $[p,q]$ at $q$.
Consider a fundamental domain $D$ for $\overline{M}_l$ as given in Figure \ref{fig:modular2} and its three fold cover $D_3$ obtained by the order 3 rotation about $p$. The geodesic segment $l_0$ and its two images $l_1$ and $l_2$, bound $D_3$ together with the three preimages of $h$ intersecting the ends of these geodesics.  The flow $\mathcal{M}_l$ is just the restriction of the geodesic flow on the unit tangent bundle to $\mathbb{H}^2$ to the unit tangent bundle of $D$, with the identifications induced by the action of the fundamental group $\pi_1(\overline{M}_l)$.

\begin{figure}[ht]
\centering
\begin{overpic}[width=7cm]{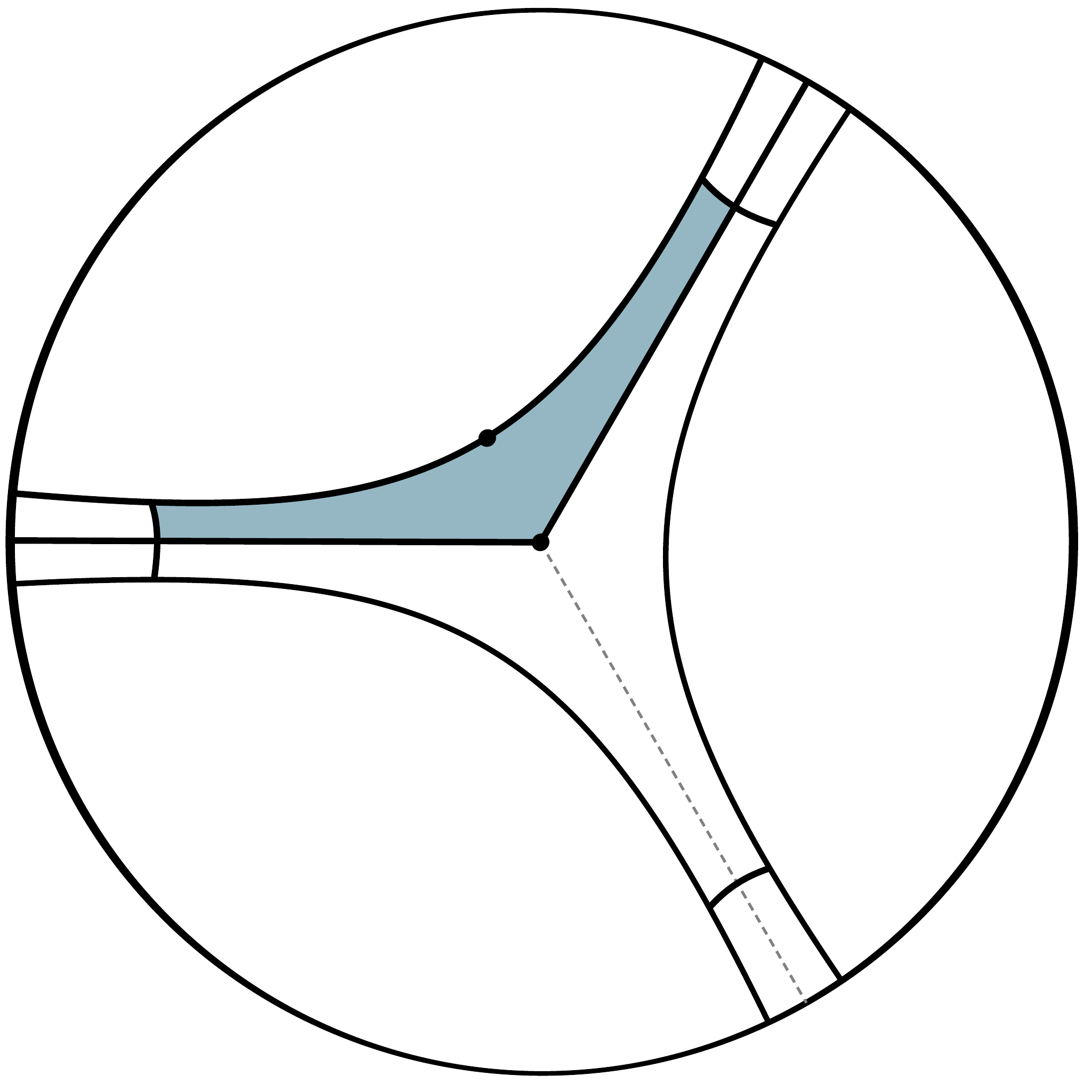}
\put(52,48){$p$}
\put(42,64){$q$}
\put(15,57){$x_0$}
\put(57,82){$x_1$}
\put(15,42){$y_0$}
\put(72,76){$y_1$}
\put(59,15){$z_0$}
\put(71,21){$z_1$}
\put(32,84){$\xymatrix{
        l_0  \ar[dr] &  \\   &  }$}
\end{overpic}
\caption{A fundamental domain $D$ (shaded) for the opened-up modular surface and a three fold cover $D_3$ of $D$ }\label{fig:modular2}
\end{figure}

We now proceed to the proof of Theorem \ref{thm:ModularModel}. Some of the results needed in the proof are well known to experts, however we include them here for completeness. 
We obtain the proof by proving two lemmas.

\begin{lemma}[Classical]
The  geodesic flow on $\mathcal{M}_l$ is Axiom A with a basic set the fake horseshoe.
\end{lemma}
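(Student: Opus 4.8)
The plan is to produce an explicit Markov section for the geodesic flow on the convex core $\mathcal{M}_l$ and read off from it that the first return map is exactly the fake horseshoe $F$. The natural section is built from the geodesic segments $l_0, l_1, l_2$ bounding the three-fold cover $D_3$ of the fundamental domain, as set up in the preceding discussion. First I would lift the flow to the unit tangent bundle $T^1\H^2$ of the hyperbolic plane and work with the cross-section consisting of unit tangent vectors based at points of $l_0$ (and its images). Because the curvature is constant $-1$, the geodesic flow is Anosov on the recurrent part, and a cross-section transverse to the flow inherits a pair of invariant (stable/unstable) foliations from the weak-stable and weak-unstable foliations of the flow; these are precisely the horizontal and vertical foliations drawn in Figure~\ref{fig:Fake}.

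The heart of the argument is a coding computation. I would parametrize a vector $v$ transverse to $l_0$ by the endpoints at infinity of the geodesic it determines — the forward endpoint governing the unstable coordinate, the backward endpoint the stable coordinate — so that the section becomes a rectangle $R$ with the two foliations as coordinate directions. Following the flow forward until it next crosses one of $l_0, l_1, l_2$ (equivalently, applying the appropriate generator of $\PSL_2(\mathbb{Z})$ to return to the fundamental domain), one checks that the return map stretches the unstable direction and contracts the stable direction, and that the image of $R$ meets $R$ in exactly the configuration of Figure~\ref{fig:Fake}: the image of each unstable leaf is a union of \emph{two} unstable leaves, with the stable and unstable foliations and their orientations preserved. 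This is exactly the defining combinatorics of the fake horseshoe, so the return map is $F$ and the recurrent set is the corresponding hyperbolic basic set. I would lean on the classical identification (Milnor \cite{Milnor:HyperbolicGeometry}, and the continued-fraction / cutting-sequence description of modular geodesics) to justify that the two generators producing the two return branches are exactly the order-two and order-three rotations about $q$ and $p$, which gives the two-rectangle Markov partition.

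Two checks make this a genuine \emph{basic set} rather than merely a symbolic model: that the section $R$ really is a global cross-section for the recurrent dynamics (every non-wandering orbit off the funnel repeatedly crosses $l_0 \cup l_1 \cup l_2$, which holds because on the convex core no recurrent geodesic escapes into the funnel past $h$), and that the return map is uniformly hyperbolic (immediate from the constant negative curvature and the transversality of the section). Axiom A for the flow then follows: the non-wandering set of $\varphi^l_{\Mod}$ decomposes into this single hyperbolic basic set together with the transient orbits crossing $h$, which are wandering and contribute nothing to $\Omega$.

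The main obstacle I expect is bookkeeping at the cone points and along $h$: verifying that the return map is well defined and continuous across the identifications induced by the order-two and order-three rotations, and that the tips of the image triangles land correctly so that the combinatorics are genuinely those of $F$ (two full unstable branches, with matching orientations) rather than a standard horseshoe or a shifted variant. Equivalently, the delicate point is tracking how the orientation-reversing behavior of the order-two rotation about $q$ interacts with the orientations of the invariant foliations, since it is precisely the orientation data that distinguishes the fake horseshoe from the classical one. Since the statement is labeled \emph{Classical}, I would keep this verification at the level of identifying the symbolic dynamics with the known $\PSL_2(\mathbb{Z})$ continued-fraction coding and citing it, rather than re-deriving the coding from scratch.
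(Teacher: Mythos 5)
Your proposal follows essentially the same route as the paper: a cross-section of unit tangent vectors over the geodesic $l_0$ bounding the three-fold cover $D_3$, stable/unstable foliations identified via horocycles (equivalently, endpoints at infinity), and a direct check that each unstable leaf flows forward to cross $l_1$ and $l_2$, yielding two orientation-preserving unstable branches and hence the combinatorics of the fake horseshoe. The only cosmetic difference is that the paper carries out the leaf-by-leaf computation explicitly (including trimming $R$ to the sub-rectangle $R'$ bounded by the invariant manifolds of the corner orbits $h_0$, $h_1$) rather than citing the continued-fraction coding, but the substance is the same.
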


\begin{proof}
We begin by constructing a cross section as in \cite{Ghys:KnotsDynamics}.
The geodesic orbits starting at points in the region in $\H^2$ enclosed by $\partial\H^2$ and a lift of the closed geodesic $h$ can only enter the convex core by intersecting $h$. All other orbits there will continue to infinity. It follows that any non-wandering point for the flow has an orbit entirely contained in the convex core.

Therefore, an orbit for a non-wandering point will have a lift crossing the region $D_3$, entering $D_3$ through one of the preimages of $l_0$ and exiting through another such boundary.
The order 3 rotation about $p$ interchanges these three boundary components, and thus we may assume the orbit enters through $l_0$ and exists through one of the other lifts of $l$.
At the point where it exits $T^1(D_3)$, we may use the rotation about $p$ again to make it exit through $l_0$, and then use the rotation about $q$ to obtain an equivalent point that enters $D_3$ again through $l_0$. Thus, the rectangle $R$ consisting of all the points on $l_0$ between its intersections with the lifts of $h$, together with a direction with angle $0\leq\theta\leq\pi$ from the tangent to $l_0$ pointing into the domain $D_3$ is a cross section for the flow on the non-wandering set. 
For a point on $l_0$, we call an angle satisfying $0\leq\theta\leq\pi$ \emph{positive}.

Next, it is a classical fact from hyperbolic geometry, that points on a set of horocycles sharing a point at infinity, with tangent directions directed towards that point (respectively, away from that point) are invariant under the geodesic flow,  and are exponentially and uniformly contracted (expanded resp.). Thus, the stable and unstable foliations are given by the horocycles
\cite{hopf1939}.
For example, if one takes all points on the horocycle $h_c=\{x+Ci\}$ for $\infty$ in the upper half plane model for $\mathbb{H}$, with tangent vectors pointing up to $\infty$, the geodesic flow is a multiplication by $e^t$ which takes $h_c$ to $h_{e^{t}C}$, preserves the upwards direction, and reduces the distances by a factor $t$.

We next find these invariant foliations explicitly within the rectangle $R$, to demonstrate that its geometric type is indeed the fake horseshoe.
Orient $l_0$ from right to left in Figure~\ref{fig:modular2}, and the orient the angles in $R$ counterclockwise from the tangent direction. 
For any point $x$ on $l_0$, there are angles $\theta_0^s(x)<\theta_0^u(x)$ so that $(x, \theta_0^s(x))$ belongs to the stable manifold of $h_0$  and $(x, \theta_0^u(x))$ belongs to the unstable manifold of $h_0$, see Figure~\ref{fig:angles}.  Similarly, for any $x\in l_0$ there are angles $\theta_1^s(x)>\theta_1^u(x)$ for $h_1$ determined by the invariant manifolds of $h_1$ (the geodesic $h_1$ is on the other endpoint of $l_0$, and traverses $l_0$ in the same orientation. Hence the change of sign).

\begin{figure}[ht]
\centering
\begin{overpic}[width=5cm]{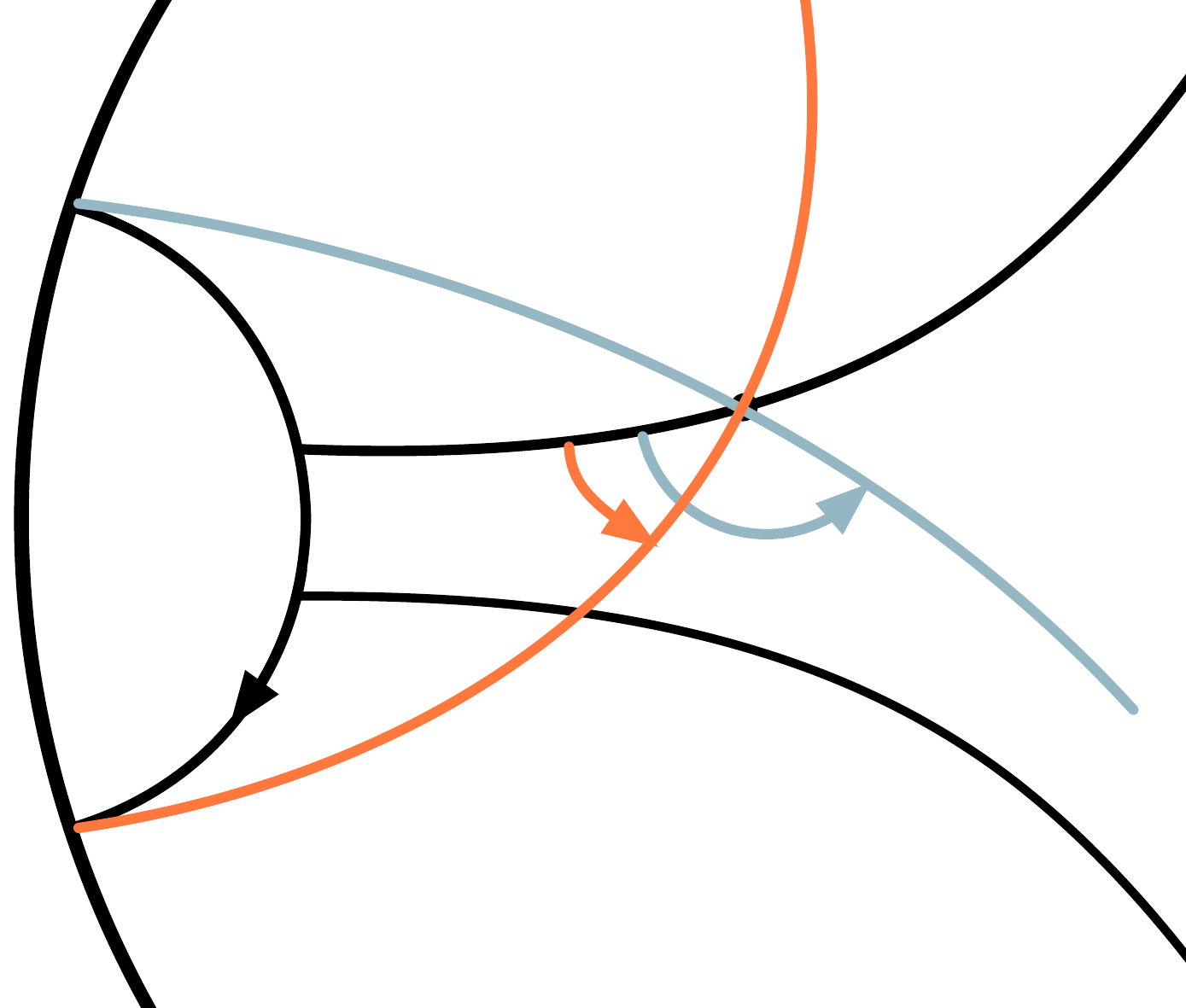}
\put(59,56){$x$}
\put(41,38){$\theta_0^s$}
\put(61,33){$\theta_0^u$}
\put(15,35){$h_0$}
\end{overpic}
\caption{The stable and unstable manifold of each of the corner orbits intersect the fiber above each point on $l_0$.}\label{fig:angles}
\end{figure}

Consider a point $z=(x,\alpha)$ in $R$ with $\alpha<\theta_0^s(x)$. As seen from Figure~\ref{fig:angles}, the orbit of $z$ then crosses the geodesic $h_0$ and remains within the region bounded by it for all forward time. Thus $z$ is a wandering point. Similarly, $z=(x,\alpha)$ for any $\alpha$ satisfying $\alpha>\theta_0^u(x)$, $\alpha< \theta_1^u(x)$ or $\alpha>\theta_1^s(x)$ is wandering.

Therefore we define $R'$ to be the sub-rectangle of $R$ 
$R'=\{(x,\alpha)\,|\,\theta_0^s(x)\leq\alpha\leq\theta_1^s(x),\ \theta_1^u(x)\leq\alpha\leq\theta_0^u(x)\}$.
Define the endpoints of $h_0$ and $h_1$ to be $a_i=lim_{t\rightarrow-\infty}(\phi^t(z)\,)$ for $z\in h_i$, and
$b_i=lim_{t\rightarrow+\infty}(\phi^t(z)\,)$ for $z\in h_i$.
As the horocycles are invariant, it follows that the one dimensional foliations of $R'$:
$\mathcal{W}^s(z)=\{(x,\alpha)\,|\, \lim_{t\rightarrow\infty}\phi^t(x,\alpha)=z$ 
for $z$ between $b_0$ and $a_1$ on the sphere at infinity in the positive direction from $l_0$, 
and 
$\mathcal{W}^u(z)=\{(x,\alpha)\,|\, \lim_{t\rightarrow-\infty}\phi^t(x,\alpha)=z$ for 
$z$ between $b_0$ and $a_1$ in the negative direction,
are invariant. 

Given that the foliations are invariant, we can now compute the first return map.
Consider a single leaf of the unstable foliation $\mathcal{W}^u(z_0)$ and its orbits under the flow as depicted in Figure\ref{fig:Leaf}. The projections to $\mathbb{H}^2$ of forward orbits of the leaf will intersect (at different times) the geodesics $l_1$ and $l_2$, and it is easy to see that any point on $(l_2\cup l_3)\cap D_3$ will be contained in exactly one such projection.
The set of all points obtained in this way above $l_1$: $\mathcal{C}_1=\{(x,\alpha)|\,x\in l_1\}$ is equivalent to a subset $A(\mathcal{C}_1)$ of $R'$, where $A\in \PSL_2({\mathbb{Z}})$ is the element taking $l_1$ to $l_0$ preserving the orientation.
It is easy to see that $A(\mathcal{C}_1)=\mathcal{W}^u(A(z_0))$ as all pointers in $\mathcal{C}_1$ and therefore in $A(\mathcal{C}_1)$ converge to the same point at infinity when $t\rightarrow-\infty$.
In the same way, the set $\mathcal{C}_2$ of all points in the orbits of this leaf above $l_2$ is a subset of $R'$, that is equal to $\mathcal{W}^u(B(z_0))$ for some $B\in \PSL_2({\mathbb{Z}})$.
It is also easy to see no point on an orbit starting at the leaf is equivalent to a point in $R'$ before the orbit reaches the sets $\mathcal{C}_1$ and $\mathcal{C}_2$.

\begin{figure}[ht]
\centering
\begin{overpic}[width=7cm]{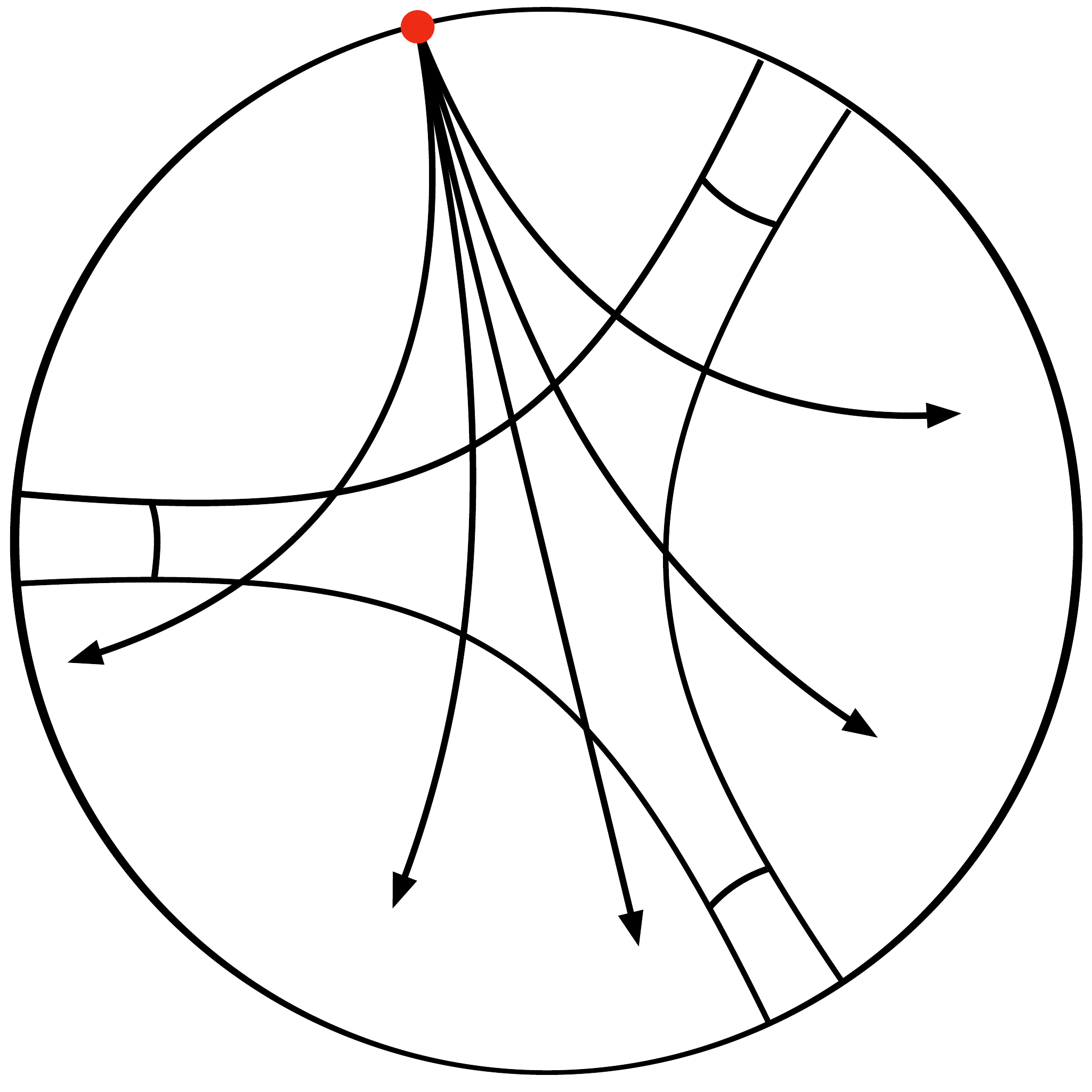}
\put(18,56){$l_0$}
\put(27,41){$l_1$}
\put(68,69){$l_2$}
\put(31,99){$z_0$}
\end{overpic}
\caption{The orbits of the flow of a single unstable leaf  $\mathcal{W}^u(z_0)$.}\label{fig:Leaf}
\end{figure}

Thus, the first return map for any single unstable leaf is composed of two unstable leaves, and the first return map preserves the orientation the leaves. 
Next consider a single stable leaf $\mathcal{W}^s(w_0)$, the set of all pointers above $l_0$ converging to $w_0$ at infinity. The point at infinity $w_0$ is in the positive direction of $l_0$, and is in the positive direction of either $l_1$ or $l_2$. Thus, the first return map of the leaf will consist of a subset of all pointers on $l_i$ pointing towards $w_0$, which is equivalent to a subset of $l_0$ pointed towards a point $C(w_0)$ for some $C\in\PSL_2(\mathbb{Z})$.  

This suffices to show that the first return map is of the geometric type of the fake horseshoe $F$, as claimed.
\end{proof}

\begin{lemma}\label{lem:Completion}
One may complete the flow $\varphi^l_{\Mod}$ to flow on $S^3$ containing two singular points: a sink and a source.
\end{lemma}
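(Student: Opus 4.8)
The plan is to obtain the completion by gluing back the tubular neighborhood $V$ of the trefoil that was removed, and filling it with a flow whose only recurrence is one source and one sink. Recall from the discussion preceding the lemma that $\varphi^l_{\Mod}$ is carried by the compact trefoil complement $S^3\setminus\mathrm{int}(V)$, that its boundary torus $T=\partial V$ is a Birkhoff torus, and that the two tangent periodic orbits $h_0,h_1$ are meridians of the trefoil cutting $T$ into two open annuli $A_{\mathrm{in}}$ and $A_{\mathrm{out}}$ along which the flow points into and out of $V$. Since regluing $V$ along $T$ recovers $S^3$, it is enough to extend the flow across $V$ so that exactly one source and one sink are created and no new recurrence appears.

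First I would use that $h_0,h_1$ are meridians to choose two disjoint meridian disks $D_0,D_1\subset V$ with $\partial D_i=h_i$; cutting $V$ along $D_0\cup D_1$ yields two $3$-balls $B_+$ and $B_-$ with $\partial B_+=A_{\mathrm{out}}\cup D_0\cup D_1$ and $\partial B_-=A_{\mathrm{in}}\cup D_0\cup D_1$. I would then place a source $\alpha$ in the interior of $B_+$ and a sink $\omega$ in the interior of $B_-$, defining on each ball the radial model in which every orbit of $B_+\setminus\{\alpha\}$ emanates from $\alpha$ and every orbit of $B_-\setminus\{\omega\}$ converges to $\omega$. The two ball conditions force the flow to point outward along all of $\partial B_+$ and inward along all of $\partial B_-$; in particular both require the flow to cross $D_0\cup D_1$ in the single direction from $B_+$ into $B_-$, so the crossing directions agree and the two radial pieces can be matched transversally across the disks. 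A collar smoothing then produces a $C^1$ vector field on $V$ agreeing with $\varphi^l_{\Mod}$ along $T$, tangent to $T$ exactly along $h_0,h_1$, and having $\alpha,\omega$ as its only singularities. Every remaining orbit travels from $\alpha$ or $A_{\mathrm{in}}$ to $\omega$ or $A_{\mathrm{out}}$, so no recurrence is added, and $h_0,h_1$ persist as the corner periodic orbits shared with the fake horseshoe on the complement side. As a consistency check, the source and sink contribute Poincar\'e--Hopf indices $-1$ and $+1$, summing to $\chi(S^3)=0$, in agreement with the absence of singularities on the trefoil complement.

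The main obstacle I anticipate is analytic rather than topological: one must arrange the two radial fillings so that the resulting field is genuinely smooth (or $C^1$ and orbit-equivalent to a smooth one) across the meridian disks and, more delicately, along the corner orbits $h_0,h_1$, where the three boundary faces $A_{\mathrm{in}}$, $A_{\mathrm{out}}$ and the disks meet non-smoothly. This amounts to choosing the local models near $h_0,h_1$ to reproduce the prescribed Birkhoff tangency of $\varphi^l_{\Mod}$ on the outside while realizing the consistent $B_+\to B_-$ crossing on the inside; once the two crossing directions are checked to coincide---which is exactly what the in/out labelling of $A_{\mathrm{in}},A_{\mathrm{out}}$ guarantees---the smoothing is standard. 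I would finally record that $\alpha$ and $\omega$ may be taken hyperbolic, as this is what is needed when this lemma is combined with the previous one to yield the Axiom A with strong transversality statement of Theorem~\ref{thm:ModularModel}.
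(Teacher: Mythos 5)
Your proposal matches the paper's proof in substance: both complete $\varphi^l_{\Mod}$ by gluing in a solid torus carrying a flow whose only recurrence is one source and one sink, tangent to the boundary Birkhoff torus exactly along the two orbits $h_0,h_1$, and both rely on the fact (from the discussion preceding the lemma) that these tangent orbits are meridians so that the filling yields $S^3$; your cut of $V$ along two meridian disks into a source ball and a sink ball is simply an explicit construction of the flow the paper prescribes via Figure~\ref{fig:Completion}. The only slip is cosmetic: in odd dimension the source has Poincar\'e--Hopf index $+1$ and the sink $-1$, not the reverse, though the sum is still $0$ as your consistency check requires.
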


\begin{proof}
We next glue a solid torus $\mathbf{W}$ with a flow $Y$ to the trefoil complement,  so that the the resulting manifold is $S^3$, $Y$ agrees with $\varphi^l_{\Mod}$ on $T=\partial\mathbf{W}$ and thus we can define a flow that is smooth on $S^3$ and agrees with each of the two flows.

The flow on $\mathbf{W}$ is given as in Figure~\ref{fig:Completion}. It contains a source which we denote by $\infty$, and a sink which we denote by $0$, together with their neighbourhoods, and a tube of flow line connecting the source to the sink on either side, so that along each tube we have a tangent orbit. It is clear the only recurrent points in $\mathbf{W}$ are $0$ and $\infty$, and thus the flow is structurally stable.

\begin{figure}[ht]
\centering
\begin{overpic}[width=9cm]{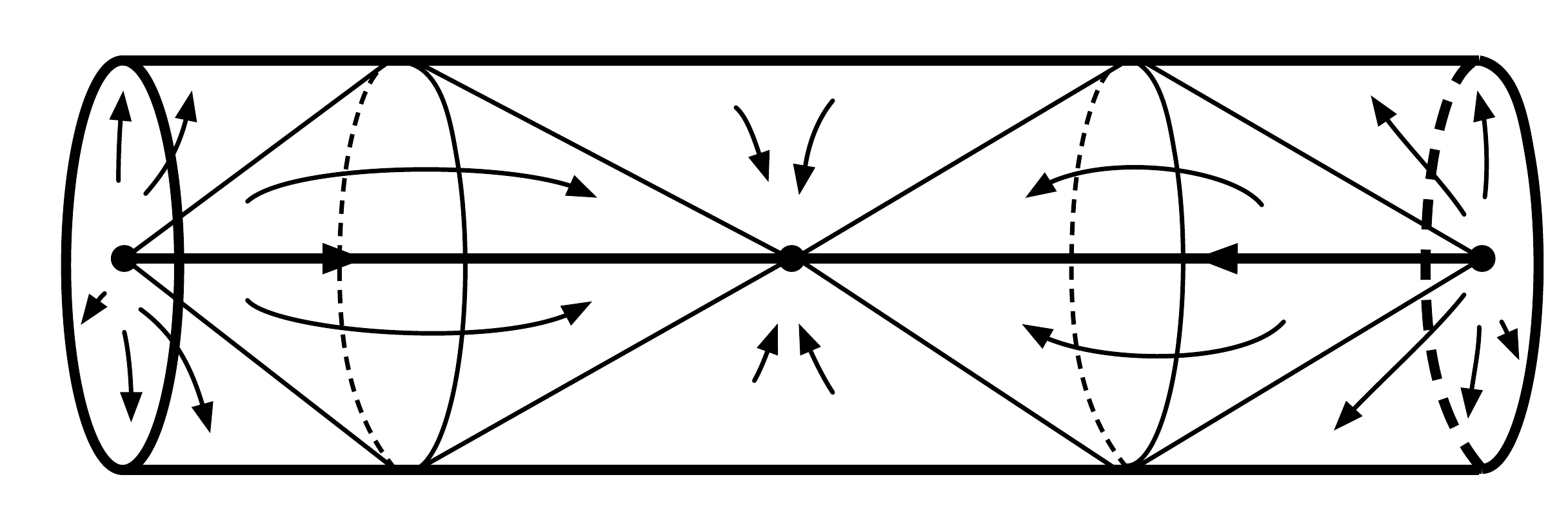}
\put(-2,18){$\infty$}
\put(100,17.5){$\infty$}
\put(24,-1){$\gamma^-$}
\put(72,-1){$\gamma^+$}
\end{overpic}
\vskip.5cm
\caption{The solid torus $\mathbf{W}$ which is glued to the boundary of the trefoil complement to obtain a flow on $S^3$.}\label{fig:Completion}
\end{figure}

As computed in \cite{Ghys:KnotsDynamics}, the slope of the closed geodesic $h$, i.e. the slope of the orbits tangent to the boundary for $\varphi^l_{\Mod}$ is the slope of a meridian for $S^3$. Thus, the resulting manifold is $S^3$.

We can decompose $S^3$ into two three balls and an $S^2\times I$ piece which is the model $\psi_F$ for the fake horseshoe as in Section~\ref{sec:model}.
It follows that $\varphi^l_{\Mod}$ can be completed to $S^3$ as claimed.
\end{proof}

This completes the proof of Theorem~\ref{thm:ModularModel}.
We can now easily prove Theorem~\ref{thm:Equivalence}:

\begin{proof}
The claim for $r<0$ is included in Theorem~\ref{thm:construction}.

For proving the claim for $r>0$, we complete the flow $\varphi^l_{\Mod}$  to a flow on $S^3$ 
As the flow on $\mathbf{W}$ and the flow on the solid torus $\mathbf{V}$ as in the proof of Proposition~\ref{pro:Birkhoff} have the same boundary behaviour and they share the fundamental property that when gluing them to another flow one does not create new recurrent orbits, we may glue the torus $\mathbf{V}$ to $S^3\setminus\text{trefoil}$ instead of 
$\mathbf{W}$ in the proof of Lemma~\ref{lem:Completion}, to obtain a flow $\psi_0$ on $S^3$.

The flows $X_r$ and $\psi_0$ are clearly equivalent on $S^2\times I$ and on each of the two three dimensional balls. The nonwandering set for each of the flows is contained either in  $S^2\times I$ or in one of the balls. This completes the proof.
\end{proof}

We now can complete the proof of Theorem~\ref{thm:Equivalence}, showing the relation of our geometric model to the modular flow:

\begin{corollary}\label{cor:Equivalence}
 $X_r$ for $r>0$ can be defined on the torus complement, and is topologically equivalent to the geodesic flow on the modular surface therein.
\end{corollary}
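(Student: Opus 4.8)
The plan is to deduce the corollary by restricting to the trefoil complement the equivalence $X_r\sim\psi_0$ that was built on $S^3$ in the proof of Theorem~\ref{thm:Equivalence}. The first assertion is immediate: by Proposition~\ref{pro:Birkhoff}, for $r>0$ the removal of the open solid torus $\mathrm{Int}(\Gamma_r)$ leaves $X_r$ as a nonsingular Axiom A flow on the trefoil complement $S^3\setminus\mathrm{Int}(\Gamma_r)$, whose boundary $T_r$ is a Birkhoff torus with meridional tangent orbits $\gamma_r^\pm$. On the other side, $\varphi^l_{\Mod}$ is by construction the geodesic flow on the (compactified) complement of the trefoil, so there is nothing further to define there.

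The key observation is that $\psi_0=\varphi^l_{\Mod}\cup_T\mathbf{V}$, where the solid torus $\mathbf{V}=\Gamma_r$ carries precisely the dynamics of $X_r$ on the trefoil neighborhood, namely the four singular points $\alpha_r,\omega_r^\pm,\sigma_r$ together with the wandering orbits between them. Hence $\psi_0$ restricted to $S^3\setminus\mathrm{Int}(\mathbf{V})$ is, tautologically, $\varphi^l_{\Mod}$ on the trefoil complement. It therefore suffices to show that the equivalence $h\colon(S^3,X_r)\to(S^3,\psi_0)$ can be arranged to carry $\Gamma_r$ onto $\mathbf{V}$; restricting $h$ to the complements then produces the required equivalence between $X_r$ and $\varphi^l_{\Mod}$ on the trefoil complement.

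To arrange this I would use that an orbit equivalence sends the nonwandering set to the nonwandering set, so $h$ carries the four singular points of $X_r$ to the four singular points of $\psi_0$ (matching their indices) and the suspended fake horseshoe to the suspended fake horseshoe. Since the invariant trefoil is reconstructed canonically from these four points as the union of $W^u(\sigma_r)$ with the connecting arcs $c_\pm$, which by Theorem~\ref{thm:construction} (see Figure~\ref{fig:Cone}) are unique up to isotopy, the map $h$ carries the invariant trefoil of $X_r$ onto that of $\psi_0$, and hence, after choosing compatible tubular neighborhoods, the exterior of $\Gamma_r$ onto the exterior of $\mathbf{V}$.

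The main obstacle is exactly this last compatibility. The equivalence $h$ of Theorem~\ref{thm:Equivalence} is assembled from the canonical model on $S^2\times I$ (unique by Beguin--Bonatti) together with the trivial equivalences on the source ball and on the Morse--Smale ball of type $A$, and this decomposition cuts transversally across the trefoil neighborhood, meeting $S^2\times I$ in the two trivial flow tubes around $c_\pm$. Making $h$ respect $\Gamma_r\to\mathbf{V}$ thus requires an isotopy, rel the recurrent set, matching the product wandering dynamics in these tubes and, above all, identifying the two boundary Birkhoff tori $T_r$ and $T$ together with their tangent orbits. Here one invokes the slope computation of Ghys~\cite{Ghys:KnotsDynamics}, already used in Lemma~\ref{lem:Completion}, which shows that both pairs of tangent orbits $\gamma_r^\pm$ and $h_0,h_1$ are meridians of the trefoil; this makes the identification $T_r\cong T$ compatible and lets the exterior product dynamics be matched across it. Granting this isotopy, $h$ restricts to the desired topological equivalence.
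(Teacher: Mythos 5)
Your proposal is correct and follows essentially the same route as the paper, which leaves the corollary as an immediate consequence of the proof of Theorem~\ref{thm:Equivalence}: one restricts the $S^3$ equivalence between $X_r$ and $\psi_0=\varphi^l_{\Mod}\cup_T\mathbf{V}$ to the complement of the invariant solid torus. Your extra care in arranging the equivalence to carry $\Gamma_r$ onto $\mathbf{V}$ (via the canonicity of the trefoil and the meridional slopes of the tangent orbits) supplies detail the paper omits, but does not change the argument.
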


\bibliographystyle{plain}
\bibliography{bibliography.bib}

\end{document}